\numberwithin{equation}{section}
\theoremstyle{plain}
\newtheorem{thm}{Theorem}[section]
\newtheorem{lem}[thm]{Lemma}
\newtheorem{cor}[thm]{Corollary}
\theoremstyle{definition}
\newtheorem{defn}{Definition}
\newtheorem{rem}[thm]{Remark}
\def \d {\mathrm{d}}
\def \vol{\mathrm{Vol}}
\def\BiRic{\mathrm{BiRic}}
\def\Ric{\mathrm{Ric}}
\def\d{\mathrm{d}}
\def\Vol{\mathrm{Vol}}
\title{ \textbf{Vanishing theorem  and finiteness theorem for $p$-harmonic $1$ form} }
\author{XiangZhi Cao\thanks{School of information engineering, Nanjing Xiaozhuang University, Nanjing {\rm211171}, China.}\thanks{Email:aaa7756kijlp@163.com } }
\date{}
\begin{document}
\maketitle
\tableofcontents
\begin{abstract}
	In this paper, we will show vanishing theorem of $p$ harmonic $1$ form on submanifold $M$ in $ \bar{M} $ whose BiRic curvature satisfying $ \overline{\mathrm{BiRic}}^a \geq \Phi_a(H,S) $. As an corollary, we can get the corresponding theorem for $ p $ harmonic function and $ p $ harmonic map. We also investigate the finiteness problem of $p$ harmonic $1$ form on submanifold $M$ in $ \bar{M} $ whose BiRic curvature satisfying $ \overline{\mathrm{BiRic}}^a \geq -k^2 $.
\end{abstract}
\section{Introduction}

Let $ (M,g) $ be an Riemannian manifold. A  vector bundled valued differential $ k $ form $ \omega $ is called p harmonic $ k $ form if it satisfy 
\begin{equation*}
	\begin{split}
		\mathrm{d} \omega=0, \delta (|\omega|^{p-2}\omega)=0.
	\end{split}
\end{equation*}
 When p=2, it is reduced to harmonic $ k $ form. Liouville type property and finiteness of the vector space  for differential form are  two important topics in the research of differential forms.

 Han \cite{zbMATH06827101} studied obtain some vanishing and finiteness theorems for $ L^p $ $ p $-harmonic 1-forms on a locally conformally flat Riemmannian manifolds . In \cite{zbMATH07040685}, Han studied Liouville theorem for p harmonic 1 form on submaniold in sphere. In \cite{zbMATH06859577}, Han proved the finiteness theorem of the space $ p $ harmonic one form on submaniold in Hadmard manifold if the first eigenvalue satisfies is bounded below by some constants.
 In \cite{zbMATH06668693}, Dung  showed some vanishing type theorems for $ p $-harmonic $ l $-forms on such a manifoldwith a weighted Poincaré inequality.
Motivated by \cite{zbMATH06668693}, in \cite{zbMATH07361892}, Chao et al. investigated   $ p $-harmonic $ l $-forms on Riemannian manifolds with a weighted Poincaré inequality, and we get a vanishing type theorem. In \cite{afuni2015monotonicity}, Afuni studied the monotonicity of vector bundle valued harmonic form. In \cite{9}, Zhang used the Moser ineration to obtian the vanishing theorem for $ p $ harmonic $ 1 $ form on manifold with nonnegative Ricci curvature.

The energy functional of $p$ harmonic map is  defined by
\begin{equation*}
	E_{p}(u)=\int_{M}\frac{| \nabla u|^{p}}{2}d\nu_{g}.
\end{equation*}
whose Euler-lagrange equation is as follows:
\begin{equation}\label{bitensionfild}
	\tau_{p}(u)=div(|du|^{p-2}du),
\end{equation}

A map $u$ is called p-harmonic map if $\tau_{p}(u)=div(|du|^{p-2}du)=0$. 
 Motivated by \cite{wangql}, in \cite{MR3692378}, we studied the Liouville theorem of $ p $ harmonic map with finite energy  from complete noncompact submanifold in partially nonnegative curved manifold into nonpositive curved manifold, the conditons in our theorem is the index of the operator $ \Delta+\frac{1}{4}(S-nH^2) $ is zero or the small conditon on $ \|S-nH^2\|_{\frac{n}{2}} $.
In\cite{zbMATH06451355}, Han obtained liouville type theorem  for p harmonic function on submanifold in sphere.  Once can refer to \cite{Baird1992,Casteras2020,dung2017p,Li2001,pigola2008constancy,MR1145657,Zhang2016A,MR1966686} and reference therein for the researches on $ p $ harmonic map and $ p $ harmonic function.

In \cite{dung2020harmonic},  the authors defined the tensor
\begin{equation*}
	\begin{split}
		\overline{\mathrm{BiRic}}^a (u,v)=\mathrm{Ric}(u,u)+a\mathrm{Ric}(v,v)-K(u,v),
	\end{split}
\end{equation*}
and obtained vanishing theorem of harmonic  one form on submanifold M in $ \bar{M} $ whose BiRic curvature satisfies $ \overline{\mathrm{BiRic}}^a \geq \Phi_a(H,S) .$ They also proved that the space $ L^{2p} $ harmonic 1 forms on $ M $ is finite if $ \overline{\mathrm{BiRic}}^a \geq -k^2 $ , provided the first eigenvalue  is bounded below by a suitable constant and $ p $ shall satisfy some conditions.

Motivated by the conditions in \cite{dung2020harmonic}, in this paper, we will establish similar Theorem as \cite[Theorem 1.1, Theorem 4.4]{dung2020harmonic} for $ p $ harmonic 1 form on submanifold M in $ \bar{M} $ whose BiRic curvature satisfies $ \overline{\mathrm{BiRic}}^a \geq \Phi_a(H,S) $. As an corollary, we can get the corresponding theorem for $ p $ harmonic function and $ p $ harmonic map.

\section{Prelimilary}
Before stating our proof, we give some important formulas, definitions and some lemmas. We say a map $u$ is of $L^{q}$-finite energy if $\int_{M}|\nabla u|^{q}<\infty$.
  For $  p $-harmonic maps,  we have the Bochner formula(c.f. Lemma 2.4 in \cite{MR3692378})
\begin{equation}\label{bochner}
	\begin{split}
		  \frac{1}{2}\Delta|du|^{2p-2}
		& =|\nabla|du|^{p-2}du|^{2}-\langle |du|^{p-2}du,\triangle|du|^{p-2}du\rangle\\
		&+|du|^{2p-4}\langle du(\mathrm{Ric}^{M}(e_{k}),du(e_{k})\rangle\\
		&-|du|^{2p-4}\mathrm{R}^{N}(du(e_{i}),du(e_{k}),du(e_{i}),du(e_{k})).
	\end{split}
\end{equation}

\begin{defn}
  Space $H^{k,p}(L^{p}(M))=\{\omega\in A^{k}(M) :\d\omega=0;   \delta d(|\omega|^{p-2}\omega)=0 ;\int_{B_{R}}|\omega|^{p}=o(R^{\gamma}); 0<\gamma <2  ,\forall R>0.  \}$
\end{defn}

\begin{lem}[c.f. Lemma 2.7 in \cite{2} ]\label{1121}
    For  any section  $\omega\in \Gamma(A^{p}(M))$ which satisfies $\d \omega=0$ and any function $f$
on $M$,  we  have

\begin{equation}
\big|d(f\omega)\big|\leq |df||\omega|.
\end{equation}
	Let 
\begin{equation*}
	\begin{split}
		A_{p,n,q}=\begin{cases}
			 \frac{1}{\max\{q,n-q\}}	& \text{if}\quad  p=2\\
			\frac{1}{(p-1)^2}\min\{1, \frac{(p-1)^2}{n-1}\}	&  \text{if} \quad  p>2 \quad  \text{and} \quad q=1\\
			0	& \text{if}\quad   p>2 \quad  \text{and} \quad 1< q\leq n-1
		\end{cases}
	\end{split}
\end{equation*}
We can see that $ A_{2,n,1}=\frac{1}{n-1} $ if $ n \geq 2. $ Hereafter, we denote $ A_{p,n,1} $ by $ A_{p,n} .$

\begin{lem}[Kato's inequality, c.f. Lemma 2.2 in \cite{MR3849353} ]\label{kato}
	For $p \geq 2, q \geq 1$, let $\omega$ be an $p$-harmonic $q$-form on a complete Riemannian manifold $M^{n}$. The following inequality holds
	\[
	\left|\nabla\left(|\omega|^{p-2} \omega\right)\right|^{2} \geq\left.\left. (1+A_{p, n, q}) |\nabla| \omega\right|^{p-1}\right|^{2}.
	\]
	Moreover, when $p=2, q>1$ then the equality holds if and only if there exixts a 1-form $\alpha$ such that
	\[
	\nabla \omega=\alpha \otimes \omega-\frac{1}{\sqrt{q+1}} \theta_{1}(\alpha \wedge \omega)+\frac{1}{\sqrt{n+1-q}} \theta_{2}\left(i_{\alpha} \omega\right) .
	\]
	
\end{lem}

\end{lem}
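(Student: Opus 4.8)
The plan is to reduce the inequality to a single pointwise algebraic fact about wedge products. Since $f$ is a function and $\omega$ is closed, the Leibniz rule gives $d(f\omega)=df\wedge\omega+f\,d\omega=df\wedge\omega$, so it suffices to establish the pointwise bound $|df\wedge\omega|\leq|df|\,|\omega|$. More generally I will prove $|\alpha\wedge\omega|\leq|\alpha|\,|\omega|$ for an arbitrary $1$-form $\alpha$ and $p$-form $\omega$; this is a statement in a single tangent space, so it can be checked with respect to a conveniently chosen orthonormal coframe.

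Fix a point at which $\alpha:=df\neq0$ (when $df=0$ both sides vanish and there is nothing to prove). I would choose a local orthonormal coframe $\theta^1,\ldots,\theta^n$ adapted to $\alpha$, namely $\theta^1=\alpha/|\alpha|$, so that $\alpha=|\alpha|\,\theta^1$. Decompose $\omega$ according to whether or not a term contains the factor $\theta^1$, writing $\omega=\theta^1\wedge\omega'+\omega''$ where neither $\omega'$ nor $\omega''$ involves $\theta^1$ (equivalently $i_{e_1}\omega'=i_{e_1}\omega''=0$). Because the basis $p$-forms $\theta^I$ indexed by increasing multi-indices $I$ are orthonormal, the pieces $\theta^1\wedge\omega'$ and $\omega''$ are mutually orthogonal and $|\omega|^2=|\omega'|^2+|\omega''|^2$. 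Now $\alpha\wedge\omega=|\alpha|\,\theta^1\wedge(\theta^1\wedge\omega'+\omega'')=|\alpha|\,\theta^1\wedge\omega''$ since $\theta^1\wedge\theta^1=0$. Wedging $\omega''$ (no term of which contains $\theta^1$) with $\theta^1$ merely prepends the factor $\theta^1$ to each orthonormal basis form that appears, hence is norm-preserving: $|\theta^1\wedge\omega''|=|\omega''|$. Therefore $|\alpha\wedge\omega|^2=|\alpha|^2|\omega''|^2\leq|\alpha|^2|\omega|^2$, and taking square roots yields $|df\wedge\omega|\leq|df|\,|\omega|$, which is the claim.

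The computation is essentially routine; the only point requiring care is the normalization of the inner product on forms, under which the increasing-multi-index basis $\{\theta^I\}$ is orthonormal, so that both the orthogonal splitting $|\omega|^2=|\omega'|^2+|\omega''|^2$ and the norm-preservation $|\theta^1\wedge\omega''|=|\omega''|$ hold without stray combinatorial factors. With that convention fixed there is no genuine obstacle. Alternatively, the same estimate follows from a direct index computation: expanding $df\wedge\omega$ in components and antisymmetrizing gives $|df\wedge\omega|^2=\sum(f_i\omega_{j_1\cdots}-f_{j_1}\omega_{i\cdots})^2$, which is dominated term by term by $|df|^2|\omega|^2=\sum_i f_i^2\sum(\omega_{j\cdots})^2$; the adapted-frame argument above is simply the coordinate-free form of this estimate.
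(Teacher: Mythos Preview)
Your proof is correct. The paper's own argument (present in the \LaTeX\ source inside an \texttt{\textbackslash iffalse} block but suppressed in the compiled version, since the lemma is quoted from \cite{2}) proceeds by a direct component computation: expand $df\wedge\omega$ in an orthonormal coframe, antisymmetrize to write $|df\wedge\omega|^2$ as $\sum_{j_2,\dots}\sum_{i<j_1}(f_i\omega_{j_1 j_2\cdots}-f_{j_1}\omega_{i j_2\cdots})^2$, and then compare with $|df|^2|\omega|^2=\sum_i f_i^2\sum\omega_{\cdots}^2$. Your main argument takes a different, more conceptual route: you adapt the coframe so that $\theta^1=df/|df|$, split $\omega=\theta^1\wedge\omega'+\omega''$ orthogonally, and observe that $df\wedge\omega=|df|\,\theta^1\wedge\omega''$ with $|\theta^1\wedge\omega''|=|\omega''|\leq|\omega|$. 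This makes the mechanism transparent---wedging with $\theta^1$ annihilates the $\theta^1$-component and is an isometry on the complement---whereas the index computation leaves the reader to check that the cross terms are controlled. You yourself sketch the index-computation alternative in your last paragraph, so you have effectively subsumed the paper's approach.
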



\section{Liouville type theorem for $ p $ harmonic $ 1 $ form }
\begin{lem}[c.f. \cite{dung2020harmonic}]
	Let $M^{n}$ be an immersed hypersurface in a Riemannian manifold $\bar{M}^{n+1}$. Let Ric, $S, H$ denote the functions that assign to each point $p$ of $M$ the Ricci curvature, the square length of the second fundamental form, and the mean curvature respectively of $M$ at $p$, then for any tangent vector $X \in T_{p} M$, we have
	\[
	\operatorname{Ric}(X, X) \geq\left(\overline{\operatorname{BiRic}^{a}}\left(\frac{X}{|X|}, N\right)-\Phi_{a}(H, S)-a(\overline{\operatorname{Ric}}(N, N)+S)\right)|X|^{2},
	\]
	where
	\[
	\Phi_{a}(H, S)=\left(\frac{n-1}{n}-a\right) S-\frac{1}{n^{2}}\left\{2(n-1) H^{2}-(n-2) H \sqrt{(n-1)\left(n S-H^{2}\right)}\right\} .
	\]
\end{lem}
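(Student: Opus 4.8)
The plan is to combine the Gauss equation with an elementary optimization over the eigenvalues of the shape operator. By homogeneity it suffices to prove the estimate for a unit vector $X\in T_pM$. Write $A$ for the shape operator of $M$ with respect to the unit normal $N$, so that $h(Y,Z)=\langle AY,Z\rangle$, $S=|A|^{2}$, and let $H=\operatorname{tr}A$ be the (unnormalized) mean curvature. Since $\overline{\operatorname{Ric}}(N,N)$, $S$ and $\overline{\operatorname{BiRic}^{a}}(X/|X|,N)$ are all invariant under $N\mapsto -N$, I am free to orient $N$ so that $H\ge 0$; this will be used only in the optimization step.

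First I would trace the Gauss equation $K^{M}(e_i,e_j)=K^{\bar M}(e_i,e_j)+h(e_i,e_i)h(e_j,e_j)-h(e_i,e_j)^{2}$ over an orthonormal basis $\{X=e_1,e_2,\dots,e_n\}$ of $T_pM$, which yields
\[
\operatorname{Ric}(X,X)=\overline{\operatorname{Ric}}(X,X)-\bar R(X,N,X,N)+H\,h(X,X)-|AX|^{2}.
\]
Recalling that $\overline{\operatorname{BiRic}^{a}}(X,N)=\overline{\operatorname{Ric}}(X,X)+a\,\overline{\operatorname{Ric}}(N,N)-\bar R(X,N,X,N)$ for the orthonormal pair $(X,N)$, this becomes
\[
\operatorname{Ric}(X,X)=\overline{\operatorname{BiRic}^{a}}(X,N)-a\,\overline{\operatorname{Ric}}(N,N)+\big(H\,h(X,X)-|AX|^{2}\big),
\]
so the lemma reduces to the pointwise algebraic inequality $H\,h(X,X)-|AX|^{2}\ge -\big(\Phi_{a}(H,S)+aS\big)$.

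Next I would prove this inequality by diagonalizing $A$. Let $\kappa_1,\dots,\kappa_n$ be its eigenvalues with orthonormal eigenvectors $f_1,\dots,f_n$, so $\sum_i\kappa_i=H$ and $\sum_i\kappa_i^{2}=S$; writing $X=\sum_i x_i f_i$ with $\sum_i x_i^{2}=1$ gives
\[
H\,h(X,X)-|AX|^{2}=\sum_{i}\kappa_i(H-\kappa_i)\,x_i^{2}\ \ge\ \min_{i}\,\kappa_i(H-\kappa_i),
\]
a convex combination of the numbers $\kappa_i(H-\kappa_i)$. Fixing an index $i$ and applying Cauchy--Schwarz to the remaining eigenvalues, $(H-\kappa_i)^{2}=\big(\sum_{j\ne i}\kappa_j\big)^{2}\le (n-1)\sum_{j\ne i}\kappa_j^{2}=(n-1)(S-\kappa_i^{2})$, which confines $\kappa_i$ to the interval $[\tfrac{H}{n}-t,\tfrac{H}{n}+t]$ with $t=\tfrac1n\sqrt{(n-1)(nS-H^{2})}$ (and $nS-H^{2}\ge 0$, again by Cauchy--Schwarz on all $n$ eigenvalues). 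The concave function $\kappa\mapsto\kappa(H-\kappa)$ has vertex at $H/2\ge H/n$, so over that interval it is smallest at the left endpoint $\kappa_0=\tfrac{H}{n}-t$ — this is where $H\ge 0$ enters. A direct computation, substituting $t^{2}=\tfrac{(n-1)S}{n}-\tfrac{(n-1)H^{2}}{n^{2}}$, gives
\[
\kappa_0(H-\kappa_0)=\frac{2(n-1)H^{2}-(n-2)H\sqrt{(n-1)(nS-H^{2})}}{n^{2}}-\frac{(n-1)S}{n}=-\big(\Phi_{a}(H,S)+aS\big),
\]
which is exactly the required bound; combining with the previous display settles the case $|X|=1$, and rescaling to arbitrary $X\in T_pM$ finishes the proof.

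The Gauss-equation bookkeeping and the closing simplification are routine; the main obstacle is the middle step. The key insight is that the sharpest lower bound of $H\,h(X,X)-|AX|^{2}$ over all unit directions is controlled by the single extreme eigenvalue of $A$, and that the Cauchy--Schwarz constraint on that eigenvalue's companions pins it to precisely the interval whose endpoint reproduces $\Phi_a$. The one delicate point is selecting the correct endpoint — hence the correct sign of the $(n-2)H\sqrt{(n-1)(nS-H^{2})}$ term — which is handled cleanly by the normalization $H\ge 0$.
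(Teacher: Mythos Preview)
The paper does not prove this lemma; it is quoted verbatim from \cite{dung2020harmonic} with no argument supplied, so there is no in-paper proof to compare against. Your argument---reduce via the Gauss equation to the extrinsic term $H\,h(X,X)-|AX|^{2}$, diagonalize the shape operator, trap each eigenvalue in $[\tfrac{H}{n}-t,\tfrac{H}{n}+t]$ by Cauchy--Schwarz, and evaluate the concave function $\kappa\mapsto\kappa(H-\kappa)$ at the extreme endpoint---is exactly the classical route to this type of Ricci lower bound (Leung, Shiohama--Xu), and your arithmetic at the end checks out line by line.

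One small point worth tightening: you justify the normalization $H\ge 0$ by saying the remaining ingredients on the right-hand side are invariant under $N\mapsto -N$, but $\Phi_a(H,S)$ is not---the cross term $(n-2)H\sqrt{(n-1)(nS-H^{2})}$ flips sign. What is true is that $\Phi_a(H,S)+aS$ does not depend on $a$, and your computation really shows $H\,h(X,X)-|AX|^{2}\ge -\big(\Phi_a(|H|,S)+aS\big)$; the stated lemma is therefore the inequality for the orientation with $H\ge 0$ (equivalently, with $H$ read as the length of the mean-curvature vector, which is the convention in \cite{dung2020harmonic} and throughout this paper). So your normalization is the intended one, only the justification sentence needs adjusting.
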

 We generalize Theorem 3.1 in \cite{dung2020harmonic}.

\begin{thm}\label{p1.1}
	Let $M^{m}(m \geq 2)$  be a complete, noncompact, connected, oriented, and
	stable hypersurface immersed in a Riemann manifold $ \bar{M} $. For any $ p\geq 2, $ if $ \overline{\mathrm{BiRic}}^a\geq \Phi_a(H,S) $, for some positive constant  a satisfying 
	\begin{equation*}
		\begin{split}
			a< 	\frac{4}{(m-1)p^2} +\frac{4(p-1)}{p^2}, 
		\end{split}
	\end{equation*}  
	then  there does not exist any nontrivial $ L^p  $ $ p $-harmonic 1-form on$  M $.

\end{thm}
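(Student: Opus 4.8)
The plan is to run a Bochner--Weitzenb\"ock argument combined with the stability (and Kato) inequalities, in the spirit of \cite{dung2020harmonic}, but carrying the extra factor of $|\omega|^{p-2}$ throughout. Let $\omega$ be an $L^p$ $p$-harmonic $1$-form on $M$, so $\d\omega=0$ and $\delta(|\omega|^{p-2}\omega)=0$. Set $\eta=|\omega|^{p-2}\omega$ for brevity. First I would record the refined Bochner formula for $\eta$ on the set where $\omega\ne 0$: using $\d\omega=0,\ \delta\eta=0$ and the Bochner identity \eqref{bochner} (specialized to a $1$-form, where $\mathrm R^N$ drops out), one gets
\begin{equation*}
	\tfrac12\Delta|\eta|^2=|\nabla\eta|^2+\langle\Ric(\omega^\sharp),\omega^\sharp\rangle|\omega|^{2p-4}-\langle\eta,\Delta_H\eta\rangle,
\end{equation*}
and since $\Delta_H\eta$ is controlled by the $p$-harmonicity the last term can be absorbed; the upshot is an inequality of the form $|\eta|\Delta|\eta|\ge \langle\Ric(\omega^\sharp),\omega^\sharp\rangle|\omega|^{2p-4}+(|\nabla\eta|^2-|\nabla|\eta||^2)$. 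Then I would invoke Lemma~\ref{kato} (Kato with $q=1$, $p\ge2$) to replace $|\nabla\eta|^2-|\nabla|\eta||^2$ by $A_{p,m}\,|\nabla|\eta||^2=A_{p,m}\,|\nabla|\omega|^{p-1}|^2$, obtaining a differential inequality for $f:=|\omega|^{p-1}=|\eta|$ of the type
\begin{equation*}
	f\Delta f\ \ge\ A_{p,m}|\nabla f|^2+\langle\Ric(\omega^\sharp),\omega^\sharp\rangle\,|\omega|^{2p-4}.
\end{equation*}

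\textbf{Bringing in curvature and stability.} Next I would feed in the hypersurface Lemma above to bound $\langle\Ric(\omega^\sharp),\omega^\sharp\rangle$ below by $\bigl(\overline{\BiRic}^a(\tfrac{\omega^\sharp}{|\omega|},N)-\Phi_a(H,S)-a(\overline{\Ric}(N,N)+S)\bigr)|\omega|^2$, and use the standing hypothesis $\overline{\BiRic}^a\ge\Phi_a(H,S)$ to kill the first two terms, leaving a lower bound $-a(\overline{\Ric}(N,N)+S)|\omega|^2$. The term $\overline{\Ric}(N,N)+S$ is exactly the quantity that appears with a coefficient in the stability (Jacobi) operator of the hypersurface: stability gives, for all Lipschitz $\varphi$ with compact support,
\begin{equation*}
	\int_M\bigl(\overline{\Ric}(N,N)+S\bigr)\varphi^2\ \le\ \int_M|\nabla\varphi|^2.
\end{equation*}
So after multiplying the differential inequality by a suitable test function built from $f$ and a cutoff $\phi$, integrating by parts, and substituting $\varphi= f\phi$ (or $\varphi=f^{\alpha}\phi$ for a well-chosen power $\alpha$) into the stability inequality, I expect the curvature term to be traded against a gradient term $a\int|\nabla(f\phi)|^2$. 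Collecting everything yields an inequality of the schematic shape
\begin{equation*}
	\Bigl(A_{p,m}-a\cdot(\text{const})\Bigr)\int_M\phi^2|\nabla f|^2\ \le\ (\text{cutoff error terms})\cdot\int_M f^2|\nabla\phi|^2 .
\end{equation*}
The arithmetic that produces the precise constant is where the stated bound $a<\frac{4}{(m-1)p^2}+\frac{4(p-1)}{p^2}$ must come out: note $A_{p,m}=\frac{1}{(p-1)^2}\min\{1,\frac{(p-1)^2}{m-1}\}$, and $f=|\omega|^{p-1}$ so $|\nabla f|^2=(p-1)^2|\omega|^{2p-4}|\nabla|\omega||^2$; tracking these factors should convert $A_{p,m}>a\cdot(\text{const})$ into exactly the displayed inequality. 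Choosing $\alpha$ optimally (a Cauchy--Schwarz with a free parameter) is the standard trick to get the sharpest admissible range of $a$.

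\textbf{Conclusion via cutoff and connectedness.} Finally I would choose the standard logarithmic cutoff: $\phi\equiv1$ on $B_R$, supported in $B_{2R}$, with $|\nabla\phi|\le C/R$. Because $\omega\in L^p$, we have $\int f^2=\int|\omega|^{2p-2}$ which is \emph{not} directly the $L^p$ norm, so one must first run a preliminary Caccioppoli/Moser step (or apply the argument to $f^\beta$ for a small $\beta$) to upgrade $L^p$ control of $|\omega|$ to control of $\int_{B_{2R}} f^2|\nabla\phi|^2\le\frac{C}{R^2}\int_{B_{2R}}|\omega|^{2p-2}$ and show this tends to $0$ as $R\to\infty$; this is the familiar point in $p$-harmonic form arguments where one needs either a volume growth input or the trick of testing with a fractional power of $|\omega|$ so that the relevant integral becomes $\int_{B_{2R}}|\omega|^p$, which is finite and hence has vanishing "tail." Letting $R\to\infty$ then forces $\nabla f\equiv0$, i.e. $|\omega|$ is constant; plugging back shows $\langle\Ric(\omega^\sharp),\omega^\sharp\rangle|\omega|^{2p-4}\equiv0$ and, on the stability side, either $|\omega|=0$ outright or $\overline{\Ric}(N,N)+S\equiv0$ with $M$ totally geodesic-like, and in the borderline case a separate short argument (using that $M$ has infinite volume when noncompact and $|\omega|$ constant forces $\omega\in L^p\Rightarrow\omega=0$) closes it. The main obstacle I anticipate is precisely the constant-chasing in the middle paragraph: making the Kato constant, the factor $(p-1)^2$ from $f=|\omega|^{p-1}$, the absorption parameter $\alpha$, and the stability coefficient $a$ all combine to reproduce \emph{exactly} the claimed threshold $a<\frac{4}{(m-1)p^2}+\frac{4(p-1)}{p^2}$ — a sign error or a misplaced power there changes the range, so that computation needs to be done carefully rather than sketched.
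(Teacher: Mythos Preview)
Your overall approach is the same as the paper's, but two points in your sketch are loose in ways that would make the computation go wrong if taken literally.

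\textbf{The $\delta d$ term is not ``absorbed''.} You write that ``$\Delta_H\eta$ is controlled by the $p$-harmonicity'' and can be absorbed. This is not true pointwise: $p$-harmonicity gives $\delta\eta=0$ and $d\omega=0$, but $d\eta=d(|\omega|^{p-2}\omega)\ne 0$ in general, so $\Delta_H\eta=\delta d\eta$ survives as a genuine term in the Bochner formula. The paper keeps $-\langle\delta d(|\omega|^{p-2}\omega),\omega\rangle$ all the way to the integrated inequality, then integrates by parts: $\int_M\phi^2\langle\delta d\eta,\omega\rangle=\int_M\langle d\eta,d(\phi^2\omega)\rangle$, and since $d\omega=0$ one has $d(\phi^2\omega)=2\phi\,d\phi\wedge\omega$, which with Lemma~\ref{1121} gives a bound by $\int\phi|\nabla\phi|\,|\omega|^{p/2}|\nabla|\omega|^{p/2}|$. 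This contributes a Young-type error that must be tracked alongside the stability error; it is not a pointwise absorption.

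\textbf{Avoiding the $L^{2p-2}$ vs.\ $L^p$ mismatch.} Your concern that $\int f^2=\int|\omega|^{2p-2}$ is not the $L^p$ norm, and the proposed fix via a Moser step or fractional power, is unnecessary. The paper sidesteps this entirely by dividing the Bochner inequality by $|\omega|^{p-2}$ at the outset, i.e.\ by working with $|\omega|\,\Delta|\omega|^{p-1}$ rather than $|\omega|^{p-1}\Delta|\omega|^{p-1}$. After this division all gradient terms reassemble as $|\nabla|\omega|^{p/2}|^2$, and the natural stability test function is $\varphi=\phi|\omega|^{p/2}$, so every integral that appears carries exactly $|\omega|^p$. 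The identity that makes the constants come out is $\phi^2\nabla|\omega|\cdot\nabla|\omega|^{p-1}=\frac{4(p-1)}{p^2}\phi^2|\nabla|\omega|^{p/2}|^2$; together with the Kato contribution $\frac{4}{(m-1)p^2}|\nabla|\omega|^{p/2}|^2$ and the stability cost $a\int\phi^2|\nabla|\omega|^{p/2}|^2$, this gives precisely the threshold $a<\frac{4}{(m-1)p^2}+\frac{4(p-1)}{p^2}$ with no auxiliary Moser iteration. Your endgame (constant $|\omega|$, then $\Ric\ge 0\Rightarrow\mathrm{vol}(M)=\infty$, forcing $\omega=0$) matches the paper's.
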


\begin{proof}
 Without generality, we can assume $M-\{\omega(x)=0$, for $\forall x \in M\}\neq\varnothing$.  The proof below proceeds on $M^+=M-\{x \in M,\omega(x)=0\} $. Considering  the integral on $M$ is identical to that of $M$, we prefer to integrate function about $s$ on M in the subsequent computations.

According to Bochner formula for $ p $ harmopnic 1 form\cite{zbMATH06827101}:
\begin{align}\label{boch2}
  &\frac{1}{2}\Delta|\omega|^{2(p-1)} =|\nabla(|\omega|^{p-2}\omega)|^{2}-\big\langle\delta d (|\omega|^{p-2}\omega),|\omega|^{p-2}\omega\big\rangle+|\omega|^{2p-4} Ric(\omega,\omega).
\end{align}
On the other hand,
\begin{align}
\frac{1}{2}\Delta|\omega|^{2(p-1)}=|\omega|^{p-1} \Delta |\omega|^{p-1}+\left| \nabla\left|\omega\right|^{p-1}\right|^{2}.
\end{align}

Thus we have (c.f. \cite[(3)]{zbMATH06827101})
\begin{align*}
|\omega|^{p-1} \Delta |\omega|^{p-1}&=|\nabla(|\omega|^{p-2}\omega)|^{2}-| \nabla|\omega|^{p-1}|^{2}-(\delta d (|\omega|^{p-2}\omega),|\omega|^{p-2}\omega)+|\omega|^{2p-4} Ric(\omega,\omega)\\
&\geq \frac{1}{(m-1)(p-1)^2}| \nabla|\omega|^{p-1}|^{2}-\left(\delta d (|\omega|^{p-2}\omega),|\omega|^{p-2}\omega\right)+|\omega|^{2p-4} Ric(\omega,\omega),
\end{align*}
where we have used Kato inequality  Ln lemma 2.3 in \cite{2} in the first inequality.

Thus, we have
\begin{equation}\label{boch1}
	\begin{split}
		|\omega|\Delta |\omega|^{p-1}\geq&  \frac{4}{(m-1)p^2}|\nabla| \omega|^{\frac{p}{2}} |^{2}-\bigg\langle \delta d(|\omega|^{p-2}\omega),\omega\bigg\rangle\\
		&+|\omega|^{p}\left(\overline{\operatorname{BiRic}^{a}}\left(\frac{X}{|X|}, N\right)-\Phi_{a}(H, S)-a(\overline{\operatorname{Ric}}(N, N)+S)\right).
	\end{split}
\end{equation}

 It is well known that we  can choose cutoff function  $\phi  $ on  noncopact manifold $M$ such that
\begin{equation}
	\begin{cases}
		&  \text{  $0\leq \phi\leq 1$ } \\
		&  \text{$\phi=1$, on $B_{R}(x_{0})$    }\\
		&  \text{ $\phi=0$, on $ M- B_{2R}(x_{0})$, }\\
		&    \text{$ |\nabla\phi| < \frac{C}{R}$ },
\end{cases}   
\end{equation}

Multiplying both sides by $\phi^{2}$, and integating over $M$,
we have
\begin{equation*}
	\begin{split}
		\int_M \phi^2	|\omega|\Delta |\omega|^{p-1}\geq& \int_M \phi^2 \frac{4}{(m-1)p^2}|\nabla| \omega|^{\frac{p}{2}} |^{2}-\int_M \phi^2\bigg\langle \delta d(|\omega|^{p-2}\omega),\omega\bigg\rangle\\&
		+\int_M \phi^2|\omega|^{p}\left(\overline{\operatorname{BiRic}^{a}}\left(\frac{X}{|X|}, N\right)-\Phi_{a}(H, S)-a(\overline{\operatorname{Ric}}(N, N)+S)\right).
	\end{split}
\end{equation*}
which can be rearrange as
\begin{equation*}
	\begin{split}
		-&\int_M \phi^2 \nabla |\omega|\nabla|\omega|^{p-1}-2\int_M \phi\nabla|\omega| \nabla \phi	|\omega|^{p-1}\\
		\geq& \int_M \phi^2 \frac{4}{(m-1)p^2}|\nabla| \omega|^{\frac{p}{2}} |^{2}-\int_M \phi^2\bigg\langle \delta d(|\omega|^{p-2}\omega),\omega\bigg\rangle\\
		+&\int_M \phi^2|\omega|^{p}\left(\overline{\operatorname{BiRic}^{a}}\left(\frac{X}{|X|}, N\right)-\Phi_{a}(H, S)-aq\right).
	\end{split}
\end{equation*}
where $ q=(\overline{\operatorname{Ric}}(N, N)+S). $

Next, we need to deal with the last two terms on the right hand side of  \eqref{e88}. By the stable condition, we have 
\begin{equation*}
	\begin{split}
 \int_{M} q \varphi^{2}|\omega|^{p}\leq 	\int_{M}|\nabla \varphi|^{2}|\omega|^{p}+\frac{p^{2}}{4} \int_{M} \varphi^{2}|\omega|^{p-2}|\nabla| \omega|^{2}+p \int_{M} \varphi|\omega|^{p-1}\langle\nabla|\omega|, \nabla \varphi\rangle  .
	\end{split}
\end{equation*}
Thus, we have 
\begin{equation}\label{e88}
	\begin{split}
		&\frac{4}{(m-1)p^2}\int_M \phi^2 |\nabla| \omega|^{\frac{p}{2}} |^{2}\\
		\leq& -\int_M \phi^2 \nabla |\omega|\nabla|\omega|^{p-1}-2\int_M \phi|\omega|^{p-1}\nabla|\omega| \nabla \phi	\\
		+&\int_M \phi^2\bigg\langle \delta d(|\omega|^{p-2}\omega),\omega\bigg\rangle\\
				&+a\bigg[\int_{M}|\nabla \varphi|^{2}|\omega|^{p}+\frac{p^{2}}{4} \int_{M} \varphi^{2}|\omega|^{p-2}|\nabla| \omega|^{2}+p \int_{M} \varphi|\omega|^{p-1}\langle\nabla|\omega|, \nabla \varphi\rangle\bigg].
	\end{split}
\end{equation}
Notice that $ \phi^2 \nabla |\omega|\nabla|\omega|^{p-1}=\frac{4(p-1)}{p^2}|\nabla |\omega|^{\frac{p}{2}} |^2 $, thus we get 
\begin{equation}\label{yt}
	\begin{split}
		\frac{4}{(m-1)p^2}\int_M \phi^2 |\nabla| \omega|^{\frac{p}{2}} |^{2}\leq& -\int_M \phi^2 \frac{4(p-1)}{p^2}|\nabla |\omega|^{\frac{p}{2}} |^2+(ap-2)\int_M \phi|\omega|^{p-1}\nabla|\omega| \nabla \phi	\\
		+&\int_M \phi^2\bigg\langle \delta d(|\omega|^{p-2}\omega),\omega\bigg\rangle\\
		+&a\bigg[\int_{M}|\nabla \varphi|^{2}|\omega|^{p}+ \int_M \phi^2|\nabla |\omega|^{\frac{p}{2}} |^2\bigg].
	\end{split}
\end{equation}
Now we need to estimate some terms  on the right hand side of \eqref{yt}. By \cite[(9)]{2}, we know that 
\begin{equation*}
	\begin{split}
		&\int_{M}\bigg\langle d ( |\omega|^{p-2}\omega), d  \omega\bigg\rangle \leq 
	4\frac{p-2}{p}\int_{M} \Big| \nabla |\omega|^{\frac{p}{2}}\Big|  \phi |\nabla \phi||\omega|^{\frac{p}{2}},
	\end{split}
\end{equation*}
Using Young's inequality, we get 
\begin{equation}
	\begin{split}
		&\frac{4}{(m-1)p^2}\int_M \phi^2 |\nabla| \omega|^{\frac{p}{2}} |^{2}+\int_M \phi^2 \frac{4(p-1)}{p^2}|\nabla |\omega|^{\frac{p}{2}} |^2\\
		&+\int_M \phi^2|\omega|^{p}\left(\overline{\operatorname{BiRic}^{a}}\left(\frac{X}{|X|}, N\right)-\Phi_{a}(H, S)\right)\\
		\leq& \int_M \frac{(2-a p)^2}{4 \epsilon}|\nabla \varphi|^2|\omega|^p+ \int_M\epsilon \frac{4}{p^2} \phi^2|\nabla |\omega|^{\frac{p}{2}} |^2	\\
			+&\bigg[(a+\frac{(p-2)^2}{4\epsilon_3(p^2)}16)\int_{M}|\nabla \varphi|^{2}|\omega|^{p}+(a+\epsilon_3) \int_M \phi^2|\nabla |\omega|^{\frac{p}{2}} |^2\bigg].
	\end{split}
\end{equation}
which can be written as 
\begin{equation}\label{563}
	\begin{split}
	&\int_M \phi^2|\omega|^{p}\left(\overline{\operatorname{BiRic}^{a}}\left(\frac{X}{|X|}, N\right)-\Phi_{a}(H, S)\right)\\
	\leq &\left(\epsilon \frac{4}{p^2}+(a+\epsilon_3)-	\frac{4}{(m-1)p^2} -\frac{4(p-1)}{p^2}\right)  \int_M \phi^2|\nabla |\omega|^{\frac{p}{2}} |^2	\\
		+&\bigg[(a+\frac{(p-2)^2}{4\epsilon_3(p^2)}16)+\frac{(2-a p)^2}{4 \epsilon}\bigg]\int_{M}|\nabla \varphi|^{2}|\omega|^{p}.
	\end{split}
\end{equation}

Noticing that $ \overline{\operatorname{BiRic}^{a}}\left(\frac{X}{|X|}, N\right)\geq \Phi_{a}(H, S) $, let $R\rightarrow\infty$,  we have
\begin{equation*}
	\begin{split}
			0\leq \left(\epsilon \frac{4}{p^2}+(a+\epsilon_3)-	\frac{4}{(m-1)p^2} -\frac{4(p-1)}{p^2}\right)  \int_M \phi^2|\nabla |\omega|^{\frac{p}{2}} |^2.	
	\end{split}
\end{equation*}

Since $ a-	\frac{4}{(m-1)p^2} -\frac{4(p-1)}{p^2} <0, $ we can choose small $ \epsilon, \epsilon_{3} $ such that  $ \epsilon \frac{4}{p^2}+(a+\epsilon_3)-	\frac{4}{(m-1)p^2} -\frac{4(p-1)}{p^2}<0. $
 so, $ |\omega| $ is a constant, all the inequalities in  \eqref{boch2} and \eqref{boch1} becomes equality. Suppose that $ |\omega|\neq 0, $
by \eqref{boch2} and \eqref{boch1}, we have
  \begin{equation*}
  	\begin{split}
  		\Ric (\omega^*,\omega^*)=0.
  	\end{split}
  \end{equation*}
then, as in \cite{dung2020harmonic} , we see that  for any unit tangent vector $ X, $
\begin{equation*}
	\begin{split}
		\Ric(X,X)\geq 0.
	\end{split}
\end{equation*}
Then $ \vol(M)=\infty $, this gives a contradiction to  that $ |\omega|\in L^p. $

\end{proof}

Similar to \cite[Corollary 3.3]{dung2020harmonic}, we can establish
\begin{cor}
	 Let $M^n$ be a complete, noncompact, stable, and minimal hypersurface immersed in a Riemannian manifold with non-negative $\overline{B i R i c}^{a}$ curvature for some
	\[
	\frac{n-1}{n} \leq a<\frac{4}{(n-1)p^2} +\frac{4(p-1)}{p^2}  ,
	\]
	then  there does not exist any nontrivial $ L^p  $ $ p $-harmonic 1-form on$  M $. 
\end{cor}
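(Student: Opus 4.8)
The plan is to obtain the corollary as an immediate consequence of Theorem~\ref{p1.1}, the only real point being to check that the hypothesis $\overline{\mathrm{BiRic}}^{a}\geq\Phi_{a}(H,S)$ is satisfied automatically by a minimal hypersurface once $a\geq\frac{n-1}{n}$. First I would use minimality: by definition the mean curvature satisfies $H\equiv 0$ on $M$, so the trace of the second fundamental form vanishes at every point.

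Next I would substitute $H=0$ into the expression
\[
\Phi_{a}(H,S)=\left(\frac{n-1}{n}-a\right)S-\frac{1}{n^{2}}\Big\{2(n-1)H^{2}-(n-2)H\sqrt{(n-1)(nS-H^{2})}\Big\}
\]
recalled in the lemma preceding Theorem~\ref{p1.1}. The entire curly bracket drops out, leaving $\Phi_{a}(0,S)=\left(\frac{n-1}{n}-a\right)S$. Since $S\geq 0$ (it is the squared norm of the second fundamental form) and the hypothesis gives $a\geq\frac{n-1}{n}$, the factor $\frac{n-1}{n}-a$ is nonpositive, hence $\Phi_{a}(0,S)\leq 0$ pointwise on $M$. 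Combining this with the assumed non-negativity of $\overline{\mathrm{BiRic}}^{a}$ yields $\overline{\mathrm{BiRic}}^{a}\geq 0\geq\Phi_{a}(H,S)$ on all of $M$, which is exactly the curvature condition required in Theorem~\ref{p1.1}.

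It then remains only to match the remaining hypotheses. Taking $m=n$, the numerical constraint $a<\frac{4}{(n-1)p^{2}}+\frac{4(p-1)}{p^{2}}$ is precisely the upper bound assumed here, and $M$ is complete, noncompact, and stable as required; if connectedness or orientability are not already available one passes to a connected component and, if necessary, to the oriented double cover, on which a nontrivial $L^{p}$ $p$-harmonic $1$-form would lift to a nontrivial one, and which inherits completeness, noncompactness, stability, and the curvature bound. Applying Theorem~\ref{p1.1} then gives that $M$ admits no nontrivial $L^{p}$ $p$-harmonic $1$-form, which is the assertion.

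Because the argument is essentially a substitution followed by an invocation of Theorem~\ref{p1.1}, there is no genuine obstacle; the single point that deserves a line of care is to confirm that the notion of ``stable'' used inside the proof of Theorem~\ref{p1.1} — namely the inequality obtained by inserting the test function $\varphi|\omega|^{p/2}$ into the stability form $\int_{M}|\nabla f|^{2}\geq\int_{M}\big(S+\overline{\mathrm{Ric}}(N,N)\big)f^{2}$ — coincides with the stability meant for the minimal hypersurface in the corollary, which it does.
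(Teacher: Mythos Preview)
Your proposal is correct and follows the same route as the paper: the paper's proof simply records that $H=0$ forces $\Phi_{a}(H,S)=\bigl(\tfrac{n-1}{n}-a\bigr)S$ and then defers to Theorem~\ref{p1.1}, while you spell out the remaining trivial step that $a\geq\tfrac{n-1}{n}$ and $S\geq 0$ make this quantity nonpositive so that $\overline{\mathrm{BiRic}}^{a}\geq 0\geq\Phi_{a}(H,S)$. Your additional remarks on connectedness, orientability, and the form of the stability inequality are harmless elaborations not present in the paper's terse version.
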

\begin{proof}
By the argument before \cite[Corollary 3.3]{dung2020harmonic}, we know that if $ H=0 $, then $  \Phi_{a}(H,S)=(\frac{n-1}{n}-a)S $
\end{proof}

Similar to \cite[Corollary 3.4]{dung2020harmonic}, we can obtain
\begin{cor}
 Let $M^{n}$ be a complete, noncompact, stable immersed hypersurface in a Riemannian manifold $\bar{M}$. If one of the following conditions holds, then there is no nontrivial $L^{p}$ harmonic 1-form on $M$ :
 
	(1) $\overline{\operatorname{BiRic}^{1}} \geq \frac{n-5}{4} H^{2}$ and $p<2+\frac{2}{\sqrt{n-1}}$;
	
	(2) $\overline{\operatorname{BiRic}^{1}} \geq \Phi_{1}(H, S)$ and $p<2+\frac{2}{\sqrt{n-1}}$;
	
	(3) $\overline{{ \BiRic }}^{1} \geq 0,2 \leq n \leq 5$ and $p<2+\frac{2}{\sqrt{n-1}}$;
	
	(4) $\overline{\operatorname{BiRic}^{a}} \geq 0,  \frac{\sqrt{n-1}}{2} \leq a<\frac{4}{(n-1)p^2} +\frac{4(p-1)}{p^2}$,
	
	then  there does not exist any nontrivial $ L^p  $ $ p $-harmonic $ 1 $-form on $  M $. 
\end{cor}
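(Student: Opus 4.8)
The plan is to derive all four cases directly from Theorem~\ref{p1.1}, applied with $m=n$; the only genuine work is two pointwise algebraic inequalities for the function $\Phi_{a}(H,S)$ together with an elementary rewriting of the admissible range of the constant. Throughout, $p\ge 2$. First I would record that, upon clearing denominators, the inequality $1<\frac{4}{(n-1)p^{2}}+\frac{4(p-1)}{p^{2}}$ is equivalent to $(n-1)(p-2)^{2}<4$, i.e. to $p<2+\frac{2}{\sqrt{n-1}}$. Hence in cases (1)--(3) the hypothesis $p<2+\frac{2}{\sqrt{n-1}}$ is exactly what Theorem~\ref{p1.1} demands with the choice $a=1$, while in case (4) the stated interval $\frac{\sqrt{n-1}}{2}\le a<\frac{4}{(n-1)p^{2}}+\frac{4(p-1)}{p^{2}}$ is already of the form required there.

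For case (1) I would first prove the pointwise bound $\Phi_{1}(H,S)\le\frac{n-5}{4}H^{2}$, valid on every hypersurface. Granting this, $\overline{\mathrm{BiRic}}^{1}\ge\frac{n-5}{4}H^{2}$ yields $\overline{\mathrm{BiRic}}^{1}\ge\Phi_{1}(H,S)$, so Theorem~\ref{p1.1} with $a=1$ applies and there is no nontrivial $L^{p}$ $p$-harmonic $1$-form. To establish the bound, set $t:=\sqrt{(n-1)(nS-H^{2})}\ge0$ (the radicand is nonnegative by the sharp Cauchy--Schwarz inequality $nS\ge H^{2}$), so that $nS=\frac{t^{2}}{n-1}+H^{2}$; substituting into $n^{2}\Phi_{1}(H,S)=-nS-2(n-1)H^{2}+(n-2)Ht$ and using the identity $(n-1)(n-2)^{2}=n^{3}-5n^{2}+8n-4$, the desired inequality rearranges to $0\le\frac{1}{n-1}\bigl(t-\tfrac{(n-1)(n-2)}{2}H\bigr)^{2}$, which is clear. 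Case (2) is then Theorem~\ref{p1.1} verbatim with $a=1$. Case (3) reduces to case (1): when $2\le n\le5$ we have $\frac{n-5}{4}\le0$, hence $\overline{\mathrm{BiRic}}^{1}\ge0\ge\frac{n-5}{4}H^{2}$.

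For case (4) I would show that $a\ge\frac{\sqrt{n-1}}{2}$ forces $\Phi_{a}(H,S)\le0$ on every hypersurface; then $\overline{\mathrm{BiRic}}^{a}\ge0\ge\Phi_{a}(H,S)$ and Theorem~\ref{p1.1} applies throughout the stated $a$-interval. Writing $a=\frac{n-1}{n}+b$ with $b\ge0$ and again setting $t=\sqrt{(n-1)(nS-H^{2})}$ (which, for $H$ fixed, sweeps out all of $[0,\infty)$ as $S$ ranges over its admissible values $S\ge H^{2}/n$), one computes
\[
n^{2}\Phi_{a}(H,S)=-\frac{bn}{n-1}\,t^{2}+(n-2)Ht-\bigl(bn+2(n-1)\bigr)H^{2},
\]
a downward parabola in $t$ when $b>0$. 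Reducing to $H\ge0$ by reversing the normal and maximizing over $t\ge0$, the requirement that this maximum be $\le0$ becomes a quadratic inequality in $b$ whose discriminant, by virtue of the identity $4(n-1)+(n-2)^{2}=n^{2}$, collapses to $b\ge\frac{n\sqrt{n-1}-2(n-1)}{2n}$, i.e. precisely $a\ge\frac{\sqrt{n-1}}{2}$; the degenerate subcases $b=0$ and $H=0$ are immediate, and $H<0$ is covered by the same reduction.

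The material above is essentially bookkeeping; the only point of substance is the two perfect-square / discriminant reductions, and in both the key is the same algebraic identity $4(n-1)+(n-2)^{2}=n^{2}$ (equivalently $(n-1)(n-2)^{2}=n^{3}-5n^{2}+8n-4$). I expect the main obstacle to be nothing more than carrying out the case-(4) optimization cleanly --- keeping careful track of signs, of the Cauchy--Schwarz constraint tying $S$ to $H$, and of the degenerate subcases --- rather than any genuinely delicate analysis.
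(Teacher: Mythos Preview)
Your approach is correct and matches the paper's: all four cases reduce to Theorem~\ref{p1.1} (with $a=1$ in (1)--(3)) via the pointwise bounds $\Phi_{1}(H,S)\le\frac{n-5}{4}H^{2}$ and, for $a\ge\frac{\sqrt{n-1}}{2}$, $\Phi_{a}(H,S)\le0$. The only difference is that the paper simply cites these inequalities from \cite{dung2020harmonic} (for case~(4) in the slightly sharper intermediate form $\Phi_{a}(H,S)\le(\tfrac{\sqrt{n-1}}{2}-a)S$), whereas you supply self-contained perfect-square and discriminant derivations.
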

\begin{proof}
	By \cite[(3.7)]{dung2020harmonic}, we know that \begin{equation*}
		\begin{split}
			\Phi_{1}(H,S)\leq \frac{n-5}{4}H^2.
		\end{split}
	\end{equation*}
Then in Theorem \ref{p1.1}, take $ a=1, $ we can finish the proof as in \cite[ Corollary 3.4]{dung2020harmonic}.  The statement follows from the fact that $  \Phi_{a}(H,S)\leq (\frac{\sqrt{n-1}}{2}-a)S $.
\end{proof}
Similar to \cite[Corollary 3.5]{dung2020harmonic}, we can obtain
\begin{cor}
	Let $M^{n},(n \geq 3)$ be a complete, compact, stable immersed hypersurface in a Riemannian manifold $\bar{M}$. Suppose that one of the following conditions holds
	
	(1) $\overline{\operatorname{BiRic}^{a}} \geq \frac{n-5}{4} H^{2}$ and $a=1, p<2+\frac{2}{\sqrt{n-1}}$;
	
	(2) $\overline{\operatorname{BiRic}^{a}} \geq 0,  \frac{\sqrt{n-1}}{2} \leq a<\frac{4\left(p-\frac{n-2}{n-1}\right)}{p^{2}}$.
	
	If $M$ admits a nontrivial $p$-harmonic 1-form $\omega$, then $\omega$ is parallel and $M$ has at least $n-1$ principal curvatures which are equal. Moreover, if $H=0$ then $M$ is totally geodesic.
\end{cor}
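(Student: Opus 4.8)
The plan is to imitate the proof of Theorem \ref{p1.1} — equivalently, to transpose \cite[Corollary 3.5]{dung2020harmonic} to the $p$-harmonic setting — exploiting compactness of $M$ twice: a $p$-harmonic $1$-form on a closed manifold automatically lies in $L^{p}$, and the cut-off function $\phi$ in the proof of Theorem \ref{p1.1} may be taken $\equiv1$, which kills every term involving $\nabla\phi$. First I would check that hypotheses (1) and (2) each supply the two ingredients of Theorem \ref{p1.1}, namely $\overline{\operatorname{BiRic}^{a}}\ge\Phi_{a}(H,S)$ and $a<\tfrac{4}{(n-1)p^{2}}+\tfrac{4(p-1)}{p^{2}}$. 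In case (1): $\Phi_{1}(H,S)\le\tfrac{n-5}{4}H^{2}$ by \cite[(3.7)]{dung2020harmonic}, while $1<\tfrac{4(p-1)}{p^{2}}+\tfrac{4}{(n-1)p^{2}}$ is equivalent to $2-\tfrac{2}{\sqrt{n-1}}<p<2+\tfrac{2}{\sqrt{n-1}}$, which holds because $p\ge2$. In case (2): $\Phi_{a}(H,S)\le(\tfrac{\sqrt{n-1}}{2}-a)S\le0\le\overline{\operatorname{BiRic}^{a}}$, and the needed bound on $a$ is exactly the assumed one since $\tfrac{4\left(p-\frac{n-2}{n-1}\right)}{p^{2}}=\tfrac{4(p-1)}{p^{2}}+\tfrac{4}{(n-1)p^{2}}$.

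Next I would integrate \eqref{boch1} over the closed manifold $M$, combining it with the stability inequality, Kato's inequality and the BiRic lemma exactly as in the derivation of \eqref{563}; with $\phi\equiv\varphi\equiv1$ all $\nabla\varphi$-terms disappear, and letting $\epsilon,\epsilon_{3}\to0$ yields
\[
\int_{M}|\omega|^{p}\Big(\overline{\operatorname{BiRic}^{a}}\big(\tfrac{\omega^{*}}{|\omega|},N\big)-\Phi_{a}(H,S)\Big)\le\Big(a-\tfrac{4(p-1)}{p^{2}}-\tfrac{4}{(n-1)p^{2}}\Big)\int_{M}\big|\nabla|\omega|^{p/2}\big|^{2}.
\]
The left-hand side is nonnegative and the coefficient on the right is negative, so both sides vanish. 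Hence $|\omega|$ is constant — a nonzero constant, since $\omega$ is nontrivial — and $\overline{\operatorname{BiRic}^{a}}\big(\tfrac{\omega^{*}}{|\omega|},N\big)=\Phi_{a}(H,S)$ holds pointwise.

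It then remains to upgrade the statement that $|\omega|$ is constant to the assertion that $\omega$ is parallel, and to read off the geometry of $M$. With $|\omega|$ constant, $d(|\omega|^{p-2}\omega)=|\omega|^{p-2}\,d\omega=0$, so $\delta d(|\omega|^{p-2}\omega)=0$ and the Bochner formula \eqref{boch2} collapses to $|\nabla\omega|^{2}=-\operatorname{Ric}(\omega,\omega)$. The stability inequality with the constant test function forces $\int_{M}q\le0$, where $q=\overline{\operatorname{Ric}}(N,N)+S$; and the BiRic lemma at $X=\omega^{*}$, together with the pointwise identity above, gives $\operatorname{Ric}(\omega,\omega)\ge-aq\,|\omega|^{2}$. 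Therefore
\[
\int_{M}|\nabla\omega|^{2}=-\int_{M}\operatorname{Ric}(\omega,\omega)\le a\,|\omega|^{2}\int_{M}q\le0,
\]
so $\nabla\omega\equiv0$ and $\omega$ is parallel. Then $\operatorname{Ric}(\omega,\omega)=0$, which via the BiRic lemma and the identity $\overline{\operatorname{BiRic}^{a}}\big(\tfrac{\omega^{*}}{|\omega|},N\big)=\Phi_{a}(H,S)$ forces $q\ge0$ pointwise, hence $q\equiv0$ and equality in the BiRic lemma at every point; by the equality discussion of that lemma in \cite{dung2020harmonic} — equality in the algebraic bound for $nH\langle AX,X\rangle-|AX|^{2}$ — the shape operator of $M$ has at least $n-1$ equal principal curvatures. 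Finally, if $H=0$ then $\Phi_{a}(0,S)=(\tfrac{n-1}{n}-a)S$, and since $(n-2)^{2}>0$ for $n\ge3$ one has $a\ge\tfrac{\sqrt{n-1}}{2}>\tfrac{n-1}{n}$ in case (2) while $a=1>\tfrac{n-1}{n}$ in case (1); sandwiching $\Phi_{a}(0,S)=\overline{\operatorname{BiRic}^{a}}\big(\tfrac{\omega^{*}}{|\omega|},N\big)$ between $0$ and $(\tfrac{n-1}{n}-a)S\le0$ gives $S=0$, i.e.\ $M$ is totally geodesic.

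\textbf{The main obstacle.} The routine-but-careful part is the constant bookkeeping of the second step (matching the coefficient to the stated range of $a$, exactly as for Theorem \ref{p1.1}); the conceptual crux is the third step, where one must notice that compactness and stability force $\int_{M}q\le0$ in spite of $S\ge0$ — this is precisely what allows the Ricci term to be absorbed, so that the constant-norm form is in fact parallel, and it is this that drives the equality analysis. The passage from equality in the BiRic lemma to the conclusion on principal curvatures is not reproved here but imported from the equality case discussed in \cite{dung2020harmonic}.
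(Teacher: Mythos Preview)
Your proposal is correct and follows essentially the same route as the paper: set $\phi\equiv1$ in the proof of Theorem~\ref{p1.1} to obtain the key integral inequality, then use the strict inequality on $a$ to force $|\omega|$ constant and carry out the equality analysis. The paper's own proof is in fact terser than yours---it records only the inequality $0\le\big(\epsilon\tfrac{4}{p^{2}}+(a+\epsilon_{3})-\tfrac{4}{(m-1)p^{2}}-\tfrac{4(p-1)}{p^{2}}\big)\int_{M}|\nabla|\omega|^{p/2}|^{2}$ and then defers the entire rigidity discussion (parallelism of $\omega$, the principal-curvature statement, and the totally geodesic conclusion) to \cite[Corollary~3.5]{dung2020harmonic}; you have spelled out that deferred part explicitly, which is fine.
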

\begin{proof}
Follow the proof of 	\cite[Corollary 3.5]{dung2020harmonic}, we have 
\begin{equation*}
	\begin{split}
		0\leq \left(\epsilon \frac{4}{p^2}+(a+\epsilon_3)-	\frac{4}{(m-1)p^2} -\frac{4(p-1)}{p^2}\right)  \int_M |\nabla |\omega|^{\frac{p}{2}} |^2.	
	\end{split}
\end{equation*}
The rest proof is the sam as  that  of 	\cite[Corollary 3.5]{dung2020harmonic}.
\end{proof}
\begin{cor}\label{cor1}
	Let $M^{m}(m \geq 2)$  be an $m$-be a complete, noncompact, connected, oriented, and
	stable hypersurface immersed in a Riemann manifold $ \bar{M} $. For any $ p>0, $ if $ \overline{\mathrm{BiRic}}^a\geq \Phi_a(H,S) $, for some positive constant  a satisfying 
	\begin{equation*}
		\begin{split}
			a< \frac{4(p-1)^2}{(m-1)p^2} +\frac{4(p-1)}{p^2} .
		\end{split}
	\end{equation*}  
 Let $u$ be an $  p $-harmonic function on  $ M. $  For $p \geq 2$, if $u$ has finite  $ p $-energy, then $ u $ must be a constant map.
\end{cor}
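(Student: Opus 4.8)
The plan is to derive Corollary \ref{cor1} from Theorem \ref{p1.1} by associating to the $p$-harmonic function $u$ the $1$-form $\omega = du$ and checking that it satisfies the hypotheses of the vanishing theorem. First I would observe that $\omega = du$ is closed automatically ($d\,du = 0$), and the $p$-harmonicity equation $\operatorname{div}(|du|^{p-2}du) = 0$ translates precisely into $\delta(|\omega|^{p-2}\omega) = 0$; since $d(|\omega|^{p-2}\omega)$ need not vanish, one either works with the version of the Bochner/refined-Kato argument in Theorem \ref{p1.1} that only uses $d\omega = 0$ and $\delta(|\omega|^{p-2}\omega)=0$ (which is exactly the setting of \eqref{boch2}), or one notes that for an exact form the extra term $\langle \delta d(|\omega|^{p-2}\omega), \omega\rangle$ integrates favourably. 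The finite $p$-energy hypothesis $\int_M |\nabla u|^p < \infty$ is exactly $\int_M |\omega|^p < \infty$, i.e. $\omega \in L^p$, so the integrability assumption of Theorem \ref{p1.1} is met.

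Next I would address the discrepancy in the stated range of $a$: Theorem \ref{p1.1} requires $a < \frac{4}{(m-1)p^2} + \frac{4(p-1)}{p^2}$, whereas Corollary \ref{cor1} allows the larger range $a < \frac{4(p-1)^2}{(m-1)p^2} + \frac{4(p-1)}{p^2}$. The point is that for a gradient $1$-form $\omega = du$ one has the full-strength refined Kato inequality $|\nabla(|\omega|^{p-2}\omega)|^2 \ge \big(1 + \frac{1}{p-1}\big)\,|\nabla|\omega|^{p-1}|^2$ replaced by the stronger bound available for exact forms, or more directly one uses $|\nabla|\omega|^{p-1}|^2$ estimates coming from the Hessian of $u$; re-running the integral estimate \eqref{e88}--\eqref{563} with the sharper constant replaces the coefficient $\frac{4}{(m-1)p^2}$ by $\frac{4(p-1)^2}{(m-1)p^2}$, which is what produces the wider admissible range. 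So the key step is to re-examine the chain leading to \eqref{563}, keeping track of where the Kato constant $A_{p,m}$ enters, and verify that the improved gradient bound for closed-and-$p$-harmonic exact forms upgrades the relevant coefficient.

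With these two adjustments in place, the argument of Theorem \ref{p1.1} applies verbatim: multiplying the refined Bochner inequality \eqref{boch1} by the cutoff $\phi^2$, integrating, using the stability inequality to absorb the $a(\overline{\operatorname{Ric}}(N,N)+S)$ term, applying Young's inequality, and letting $R \to \infty$ forces the coefficient of $\int_M \phi^2 |\nabla|\omega|^{p/2}|^2$ to be strictly negative under the hypothesis on $a$, hence $\nabla|\omega|^{p/2} \equiv 0$, so $|\omega| = |\nabla u|$ is constant. Finally, if this constant were nonzero then, as in the conclusion of Theorem \ref{p1.1} and as in \cite{dung2020harmonic}, all Bochner inequalities become equalities, $M$ has nonnegative Ricci curvature, hence infinite volume, contradicting $\int_M |\nabla u|^p < \infty$ with $|\nabla u|$ a nonzero constant; therefore $\nabla u \equiv 0$ and $u$ is constant.

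The main obstacle I anticipate is the bookkeeping of the sharp constant in the refined Kato inequality for exact $p$-harmonic $1$-forms and confirming that it genuinely yields the coefficient $\frac{4(p-1)^2}{(m-1)p^2}$ rather than $\frac{4}{(m-1)p^2}$ — i.e. justifying why the gradient case is quantitatively better than the general closed-form case of Lemma \ref{kato}. A secondary, more routine, technical point is handling the term $\langle \delta d(|\omega|^{p-2}\omega), \omega \rangle$, which does not obviously vanish even for $\omega = du$ because $|\omega|^{p-2}\omega$ is generally not closed; one must either show its integral against $\phi^2$ is controlled by the cutoff gradient (as in the estimate borrowed from \cite[(9)]{2}) or restrict attention, as the theorem implicitly does, to the identity \eqref{boch2} which sidesteps it.
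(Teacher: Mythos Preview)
Your proposal is correct and follows essentially the same approach as the paper: set $\omega=du$, invoke the sharper refined Kato inequality available for $p$-harmonic \emph{functions} (namely $|\nabla(|du|^{p-2}du)|^{2}\ge \frac{m}{m-1}\,|\nabla|du|^{p-1}|^{2}$, see \cite[Lemma~2.4]{zbMATH06451355}), which upgrades the constant $\frac{4}{(m-1)p^{2}}$ in \eqref{boch1} to $\frac{4(p-1)^{2}}{(m-1)p^{2}}$, and then rerun the proof of Theorem~\ref{p1.1} verbatim below \eqref{boch1}. The explicit Kato constant $\frac{m}{m-1}$ resolves the bookkeeping obstacle you anticipated, and the $\langle \delta d(|\omega|^{p-2}\omega),\omega\rangle$ term is handled exactly as in Theorem~\ref{p1.1} via the estimate from \cite[(9)]{2}.
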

\begin{rem}
	One can use the conditions and methods of \cite[Theorem 2.3]{dung2017p} to conclude that every $ p $ harmonic function u with finite $ L^{2\beta} $ energy is constant provided $ \beta $ satisfying some certain conditons. 
\end{rem}
\begin{proof}
	First, we recall the Bochner formula for $ p $ harmonic function (c.f. \cite{MR3692378} or \cite{zbMATH06451355} )
	\begin{align} \label{2.1.1}
		\frac{1}{2}\Delta|du|^{2p-2} =&|\nabla\left( |du|^{p-2}du\right) |^{2}-\langle|du|^{2p-2}du,\Delta\left( |du|^{p-2}du\rangle\right) \nonumber\\
		&+|du|^{2p-4}\langle du(\mathrm{Ric}^{M}(e_{k}),du(e_{k})\rangle,
	\end{align}

the following  Kato inequality for p-harmonic function (cf. lemma 2.4 in \cite{zbMATH06451355}):
\begin{equation}\label{2.1.2}
	|\nabla(|du|^{p-2}du) |^2\geq \frac{n}{n-1}|\nabla|du|^{p-1} |^2.
\end{equation}
In this case, \eqref{boch1} can be replaced by

\begin{equation}
	\begin{split}
		|\omega|\Delta |\omega|^{p-1}\geq&  \frac{4(p-1)^2}{(m-1)p^2}|\nabla| \omega|^{\frac{p}{2}} |^{2}-\bigg\langle \delta d(|\omega|^{p-2}\omega),\omega\bigg\rangle\\
		&+|\omega|^{p}\left(\overline{\operatorname{BiRic}^{a}}\left(\frac{X}{|X|}, N\right)-\Phi_{a}(H, S)-a(\overline{\operatorname{Ric}}(N, N)+S)\right).
	\end{split}
\end{equation}
The rest proof  is almost the same as in the proof of Theorem \ref{p1.1} below \eqref{boch1}, once we replace $ \frac{4}{(m-1)p^2} $ by $ \frac{4(p-1)^2}{(m-1)p^2} $ .

\end{proof}

\begin{cor}\label{cor2}
	Let $M^{n}(n \geq 2)$  be an $m$-be a complete, noncompact, connected, oriented, and
	stable hypersurface immersed in a Riemann manifold $ \bar{M} $. For any $ p>0, $ if $ \overline{\mathrm{BiRic}}^a\geq \Phi_a(H,S) $, for some positive constant  a satisfying 
	\begin{equation*}
		\begin{split}
			a< \frac{4(p-1)}{p^2} 
		\end{split}
	\end{equation*}  
	Let $u:\left(M, g,  d v_{g}\right) \rightarrow(N, h)$ be an p-harmonic map from an oriented complete noncompact manifold into a Riemannian manifold  and $K^{N} \leq 0$. For $p \geq 2$, if $u$ has finite  $ p $-energy, then $ u $ must be a constant map.
\end{cor}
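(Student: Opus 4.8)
The plan is to rerun the proof of Theorem~\ref{p1.1}, replacing the Bochner formula \eqref{boch2} for $p$-harmonic $1$-forms by the Bochner formula \eqref{bochner} for $p$-harmonic maps and switching off the Kato improvement. As in Theorem~\ref{p1.1}, one works on $M^{+}=\{x\in M:du(x)\neq 0\}$, where $|du|^{p-2}du$ is smooth. First I would use $K^{N}\le 0$ to discard the target-curvature term $-|du|^{2p-4}\,\mathrm{R}^{N}(du(e_{i}),du(e_{k}),du(e_{i}),du(e_{k}))$ in \eqref{bochner}, which is nonnegative because $K^{N}\le 0$. The domain term $|du|^{2p-4}\langle du(\Ric^{M}(e_{k})),du(e_{k})\rangle$ is bounded below exactly as in Theorem~\ref{p1.1}: written as a sum of Ricci curvatures of $M$ over an orthonormal frame of $u^{*}TN$ and estimated by the hypersurface Ricci lemma preceding Theorem~\ref{p1.1}, it produces $|du|^{p}\big(\overline{\BiRic}^{a}(X/|X|,N)-\Phi_{a}(H,S)-a(\overline{\Ric}(N,N)+S)\big)$. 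Since for a general target only the trivial Kato inequality $|\nabla(|du|^{p-2}du)|^{2}\ge|\nabla|du|^{p-1}|^{2}$ is available, the analogue of \eqref{boch1} carries no $\frac{4(p-1)^{2}}{(m-1)p^{2}}$-type term, and becomes
\[
|du|\,\Delta|du|^{p-1}\ \ge\ -\langle\delta d(|du|^{p-2}du),du\rangle+|du|^{p}\left(\overline{\BiRic}^{a}\left(\frac{X}{|X|},N\right)-\Phi_{a}(H,S)-aq\right),
\]
with $q=\overline{\Ric}(N,N)+S$.

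Next I would multiply by $\phi^{2}$, where $\phi$ is the usual cutoff equal to $1$ on $B_{R}(x_{0})$, supported in $B_{2R}(x_{0})$, with $|\nabla\phi|\le C/R$, integrate over $M$, and integrate by parts on the left. The identity $\nabla|du|\,\nabla|du|^{p-1}=\frac{4(p-1)}{p^{2}}|\nabla|du|^{\frac{p}{2}}|^{2}$ supplies the coefficient $\frac{4(p-1)}{p^{2}}$ on the gradient term, as in the passage to \eqref{yt}. The term $\int_{M}\phi^{2}\langle\delta d(|du|^{p-2}du),du\rangle$ is handled exactly as in Theorem~\ref{p1.1}, using $\div(|du|^{p-2}du)=\tau_{p}(u)=0$, the estimate \cite[(9)]{2}, and Young's inequality, and $a\int_{M}q\,\phi^{2}|du|^{p}$ is absorbed by the stability inequality, just as in the passage from \eqref{e88} to \eqref{563}. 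The result is an inequality of the form
\[
\int_{M}\phi^{2}|du|^{p}\left(\overline{\BiRic}^{a}\left(\frac{X}{|X|},N\right)-\Phi_{a}(H,S)\right)\le\left(\frac{4\epsilon}{p^{2}}+(a+\epsilon_{3})-\frac{4(p-1)}{p^{2}}\right)\int_{M}\phi^{2}|\nabla|du|^{\frac{p}{2}}|^{2}+C_{\epsilon,\epsilon_{3}}\int_{M}|\nabla\phi|^{2}|du|^{p}.
\]

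Finally, because $a<\frac{4(p-1)}{p^{2}}$, I would choose $\epsilon,\epsilon_{3}>0$ small enough that the coefficient of $\int\phi^{2}|\nabla|du|^{p/2}|^{2}$ is strictly negative. Letting $R\to\infty$ and using the finite $p$-energy $\int_{M}|du|^{p}<\infty$, the $|\nabla\phi|^{2}$-term vanishes; since $\overline{\BiRic}^{a}\ge\Phi_{a}(H,S)$ the left side is $\ge 0$, so $\nabla|du|^{p/2}\equiv 0$ and $|du|$ is constant. If that constant is $0$ we are done; otherwise all inequalities used are equalities, hence $\Ric^{M}(du(e_{k}),du(e_{k}))=0$ and, arguing as in \cite{dung2020harmonic}, $\Ric^{M}(X,X)\ge 0$ for all unit $X$, forcing $\vol(M)=\infty$; but then $\int_{M}|du|^{p}=|du|^{p}\,\vol(M)=\infty$ contradicts finite $p$-energy. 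Hence $u$ is constant. I expect the one genuinely delicate step to be the bundle-valued bookkeeping in $\int_{M}\phi^{2}\langle\delta d(|du|^{p-2}du),du\rangle$ — one must carry the pullback connection on $u^{*}TN$ through the integration by parts — but since $K^{N}\le 0$ eliminates the only term in which the target geometry could obstruct the estimate, this step reduces essentially verbatim to the computation in Theorem~\ref{p1.1}.
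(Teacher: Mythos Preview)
Your proposal is correct and matches the paper's own proof essentially verbatim: the paper invokes the Bochner formula \eqref{bochner} for $p$-harmonic maps, notes that only the trivial Kato inequality $|\nabla(|du|^{p-2}du)|^{2}\ge|\nabla|du|^{p-1}|^{2}$ is available (citing \cite{zbMATH04182289}), obtains the analogue of \eqref{boch1} with the $\frac{4}{(m-1)p^{2}}$ term deleted, and then refers back to the proof of Theorem~\ref{p1.1} with $\frac{4}{(m-1)p^{2}}$ replaced by zero throughout. Your write-up is in fact more detailed than the paper's (which is three lines), and your cautionary remark about the bundle-valued bookkeeping in the $\delta d$ term is a fair point that the paper simply suppresses.
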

\begin{proof}
	It is well known that (c.f. \cite{MR3692378})
\begin{align} \label{2.1.1}
	\frac{1}{2}\Delta|du|^{2p-2} =&|\nabla\left( |du|^{p-2}du\right) |^{2}-\langle|du|^{2p-2}du,\Delta\left( |du|^{p-2}du\rangle\right) \nonumber\\
	&+|du|^{2p-4}\langle du(\mathrm{Ric}^{M}(e_{k}),du(e_{k})\rangle+|du|^{2p-4}\langle R^N( du(e_i),du(e_j),du(e_i),du(e_j))\rangle,
\end{align}

However, for $ p $ harmonic map, we only have (\cite{zbMATH04182289})
\begin{equation*}
	\begin{split}
		|\nabla(|du|^{p-2}du) |^2\geq |\nabla|du|^{p-1} |^2.
	\end{split}
\end{equation*}

In this case,  in the proof of Theorem \ref{p1.1},\eqref{boch1} can be replaced by
\begin{equation}
	\begin{split}
		|\omega|\Delta |\omega|^{p-1}\geq&  -\bigg\langle \delta d(|\omega|^{p-2}\omega),\omega\bigg\rangle\\
		&+|\omega|^{p}\left(\overline{\operatorname{BiRic}^{a}}\left(\frac{X}{|X|}, N\right)-\Phi_{a}(H, S)-a(\overline{\operatorname{Ric}}(N, N)+S)\right).
	\end{split}
\end{equation}
The rest proof  is almost the same as in the proof of Theorem \ref{p1.1} below \eqref{boch1}, once we replace $ \frac{4}{(m-1)p^2} $ by zero .
\end{proof}
\begin{defn}[c.f. Definition 3.6 in \cite{dung2020harmonic}]
	 An immersed hypersurface $M^{n}$ in a Riemannian manifold $\bar{M}^{n+1}$ is said to have a Sobolev inequality if there exists a positive constant $C_{s}$ such that
\begin{equation}\label{sobo}
	\begin{split}
		\left(\int_{M} f^{\frac{2 n}{n-2}}\right)^{\frac{n-2}{n}} \leq C_{s} \int_{M}|\nabla f|^{2}
	\end{split}
\end{equation}
	for any nonnegative $\mathcal{C}^{1}$-functions $f: M \rightarrow \mathbb{R}$ with compact support. Here, $C_{s}$ is said to be the Sobolev constant.
\end{defn}
\begin{thm}
	Let $M^{n}(n \geq 2)$  be an $n$-dimensional a complete, noncompact, connected, oriented, and
	stable minimal  hypersurface immersed in a Riemann manifold $ \bar{M} $.Assume that a
	Sobolev inequality holds on M . For any $ p\geq 2, $ if $ \bar{BiRic^a}\geq 0 $, for some positive constant  $ a $ satisfying 
	\begin{equation*}
		\begin{split}
			a< \min \bigg\{\frac{4}{(n-1)p^2} +\frac{4(p-1)}{p^2} , \frac{n-1}{n} \bigg\} ,
		\end{split}
	\end{equation*} 
and 
\begin{equation}\label{}
	\begin{split}
		\|S\|_{\frac{n}{2}} \leq  \frac{\frac{4}{(n-1)p^2} +\frac{4(p-1)}{p^2} -a}{C_s(\frac{n-1}{n}-a)},
	\end{split}
\end{equation} 
where $ C_s $ is the constant in \eqref{sobo},
	then  there does not exist any nontrivial $ L^p  $ p-harmonic 1-form on M. 
\end{thm}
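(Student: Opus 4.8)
The plan is to run the Bochner--Kato--stability scheme from the proof of Theorem~\ref{p1.1}, now exploiting that minimality gives $H=0$, hence $\Phi_a(H,S)=\bigl(\tfrac{n-1}{n}-a\bigr)S$ by the computation recalled before \cite[Corollary~3.3]{dung2020harmonic}; the key new point is that this residual curvature term is a multiple of $S$, so it can be absorbed through the Sobolev inequality \eqref{sobo} once $\|S\|_{n/2}$ is small. Suppose for contradiction that $\omega$ is a nontrivial $L^p$ $p$-harmonic $1$-form and work on $M^{+}=M\setminus\{\omega=0\}$ as in Theorem~\ref{p1.1}. The hypersurface Ricci lower bound stated just before Theorem~\ref{p1.1}, together with $\overline{\mathrm{BiRic}^a}\ge0$ and $H=0$, gives $\mathrm{Ric}(X,X)\ge-\bigl(\tfrac{n-1}{n}-a\bigr)S|X|^2-a(\overline{\mathrm{Ric}}(N,N)+S)|X|^2$; substituting this into the Bochner formula \eqref{boch2} and Kato's inequality (Lemma~\ref{kato}) exactly as in the derivation of \eqref{boch1} yields
\[
|\omega|\,\Delta|\omega|^{p-1}\ \ge\ \frac{4}{(n-1)p^2}\bigl|\nabla|\omega|^{p/2}\bigr|^2-\bigl\langle\delta d(|\omega|^{p-2}\omega),\omega\bigr\rangle-\Bigl(\tfrac{n-1}{n}-a\Bigr)S|\omega|^p-a\bigl(\overline{\mathrm{Ric}}(N,N)+S\bigr)|\omega|^p .
\]

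Next I would multiply by $\phi^{2}$, with $\phi$ the usual cut-off ($0\le\phi\le1$, $\phi\equiv1$ on $B_R(x_0)$, $\phi\equiv0$ outside $B_{2R}(x_0)$, $|\nabla\phi|\le C/R$), integrate over $M$, and integrate by parts on the left, which produces $\tfrac{4(p-1)}{p^{2}}\int_M\phi^{2}|\nabla|\omega|^{p/2}|^{2}$ together with a cross term $\int_M\phi|\omega|^{p-1}\langle\nabla|\omega|,\nabla\phi\rangle$. The term $\int_M\phi^{2}\langle\delta d(|\omega|^{p-2}\omega),\omega\rangle$ is controlled by \cite[(9)]{2} as in Theorem~\ref{p1.1}, and $\int_M(\overline{\mathrm{Ric}}(N,N)+S)\phi^{2}|\omega|^{p}$ by the stability inequality applied to $f=\phi|\omega|^{p/2}$, which puts $a\int_M\phi^{2}|\nabla|\omega|^{p/2}|^{2}$ (plus a multiple of $\int_M|\nabla\phi|^{2}|\omega|^{p}$ and another cross term) on the right. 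The genuinely new estimate is for $\bigl(\tfrac{n-1}{n}-a\bigr)\int_M S\,(\phi|\omega|^{p/2})^{2}$: by Hölder and \eqref{sobo},
\[
\int_M S\,(\phi|\omega|^{p/2})^{2}\ \le\ \|S\|_{n/2}\Bigl(\int_M(\phi|\omega|^{p/2})^{\frac{2n}{n-2}}\Bigr)^{\frac{n-2}{n}}\ \le\ C_{s}\|S\|_{n/2}\int_M\bigl|\nabla(\phi|\omega|^{p/2})\bigr|^{2},
\]
and after expanding $|\nabla(\phi|\omega|^{p/2})|^{2}=\phi^{2}|\nabla|\omega|^{p/2}|^{2}+|\omega|^{p}|\nabla\phi|^{2}+2\phi|\omega|^{p/2}\langle\nabla\phi,\nabla|\omega|^{p/2}\rangle$ and absorbing the cross term by Young's inequality, this term contributes $\bigl(\tfrac{n-1}{n}-a\bigr)C_{s}\|S\|_{n/2}$ to the coefficient of $\int_M\phi^{2}|\nabla|\omega|^{p/2}|^{2}$.

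Gathering the $\int_M\phi^{2}|\nabla|\omega|^{p/2}|^{2}$ terms on the left and the $\int_M|\nabla\phi|^{2}|\omega|^{p}$ terms on the right (all the remaining cross terms being disposed of by Young with small parameters, whose total weight I denote $O(\epsilon)$) gives
\[
\Bigl(\tfrac{4}{(n-1)p^{2}}+\tfrac{4(p-1)}{p^{2}}-a-\bigl(\tfrac{n-1}{n}-a\bigr)C_{s}\|S\|_{n/2}-O(\epsilon)\Bigr)\int_M\phi^{2}\bigl|\nabla|\omega|^{p/2}\bigr|^{2}\ \le\ C_{\epsilon}\int_M|\nabla\phi|^{2}|\omega|^{p}.
\]
Under the hypotheses $a<\tfrac{4}{(n-1)p^{2}}+\tfrac{4(p-1)}{p^{2}}$, $a<\tfrac{n-1}{n}$ and $\|S\|_{n/2}\le\bigl(\tfrac{4}{(n-1)p^{2}}+\tfrac{4(p-1)}{p^{2}}-a\bigr)/\bigl(C_{s}(\tfrac{n-1}{n}-a)\bigr)$ the bracketed constant is positive once $\epsilon$ is small enough, so since $|\omega|\in L^{p}$, letting $R\to\infty$ (whence $\int_M|\nabla\phi|^{2}|\omega|^{p}\le\tfrac{C^{2}}{R^{2}}\int_{B_{2R}\setminus B_R}|\omega|^{p}\to0$) forces $\nabla|\omega|^{p/2}\equiv0$, i.e.\ $|\omega|$ is constant. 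If that constant is nonzero, all inequalities in \eqref{boch2} and in Kato's inequality become equalities, which as in \cite{dung2020harmonic} forces $\mathrm{Ric}(X,X)\ge0$ for every unit $X$, hence $\mathrm{Vol}(M)=\infty$, contradicting $|\omega|\in L^{p}$ with $|\omega|$ a nonzero constant; therefore $\omega\equiv0$. I expect the main obstacle to be exactly this collecting step: one has to verify that the Sobolev-controlled term enters with precisely the coefficient $\bigl(\tfrac{n-1}{n}-a\bigr)C_{s}$ — so that the stated bound on $\|S\|_{n/2}$ is exactly what is required — and that every cross term produced by the integrations by parts, by the stability inequality and by Hölder can be pushed into an arbitrarily small multiple of $\int_M\phi^{2}|\nabla|\omega|^{p/2}|^{2}$ plus a harmless multiple of $\int_M|\nabla\phi|^{2}|\omega|^{p}$; the equality analysis at the end is then routine given Theorem~\ref{p1.1}.
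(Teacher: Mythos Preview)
Your proposal is correct and follows essentially the same route as the paper: you recover inequality \eqref{563} from the proof of Theorem~\ref{p1.1}, use $H=0$ and $\overline{\mathrm{BiRic}^a}\ge0$ to reduce the curvature term to $-C_{n,a}\int_M S\phi^2|\omega|^p$ with $C_{n,a}=\tfrac{n-1}{n}-a$, and then absorb this via H\"older plus the Sobolev inequality \eqref{sobo}, exactly as the paper does. The only minor deviation is in the endgame: once $|\omega|$ is constant, the paper obtains $\mathrm{Vol}(M)=\infty$ directly from the Sobolev inequality \eqref{sobo} (valid for any manifold carrying such an inequality), whereas you route through the equality case of Bochner/Kato to get $\mathrm{Ric}\ge0$ and then infinite volume; the paper's shortcut is cleaner but your argument is also valid.
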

\begin{proof}From \eqref{563} in the proof of Theorem \ref{p1.1}, it is easy to see that 
 \begin{equation}\label{er}
 	\begin{split}
 		&\left(\epsilon \frac{4}{p^2}+(a+\epsilon_3)-	\frac{4}{(n-1)p^2} -\frac{4(p-1)}{p^2}\right)  \int_M \phi^2|\nabla |\omega|^{\frac{p}{2}} |^2	\\
 		&+\bigg[(a+\frac{(p-2)^2}{4\epsilon_3(p^2)}16)+\frac{(2-a p)^2}{4 \epsilon}\bigg]\int_{M}|\nabla \varphi|^{2}|\omega|^{p}\\
 	\geq&	\int_M \phi^2|\omega|^{p}\left(\overline{\operatorname{BiRic}^{a}}\left(\frac{X}{|X|}, N\right)-\Phi_{a}(H, S)\right)\\
 	\geq &-C_{n,a}  \int_M S\phi^2|\omega|^{p}.
 	\end{split}
 \end{equation}
By \cite{dung2020harmonic}, we have 
\begin{equation*}
	\begin{split}
	\int_M S\phi^2|\omega|^{p}	\leq C_s\|S\|_{n / 2}\left(\left(1+\frac{1}{\epsilon}\right) \int_M|\nabla \varphi|^2|\omega|^p+\frac{(1+\epsilon) p^2}{4} \int_M \varphi^2|\omega|^{p-2}|\nabla| \omega \|^2\right).
	\end{split}
\end{equation*}
Thus ,we have 

\begin{equation}\label{}
	\begin{split}
	0\leq& 	\left(\epsilon \frac{4}{p^2}+(a+\epsilon_3)-	\frac{4}{(n-1)p^2} -\frac{4(p-1)}{p^2}+C_{n,a}C_s\|S\|_{\frac{n}{2}}(1+\epsilon)\right)  \int_M \phi^2|\nabla |\omega|^{\frac{p}{2}} |^2	\\
		&+\bigg[(a+\frac{(p-2)^2}{4\epsilon_3(p^2)}16)+\frac{(2-a p)^2}{4 \epsilon}+C_{n,a}C_s\|S\|_{n / 2}\left(1+\frac{1}{\epsilon}\right)\bigg]\int_{M}|\nabla \varphi|^{2}|\omega|^{p}.
	\end{split}
\end{equation}
Notice that Sobolev inequality holds on $ M $ , then $ \mathrm{vol}(M)=\infty. $ Then by the same argument as in the proof of Theorem \ref{p1.1}, we see that $ \omega $ is trivial.
\begin{cor}
	Let $M^{n}(n \geq 2)$  be an $n$-dimensional a complete, noncompact, connected, oriented, and
	stable minimal  hypersurface immersed in a Riemann manifold $ \bar{M} $.Assume that a
	Sobolev inequality holds on M . For any $ p>\frac{n-2}{n-1}, $ if $ \overline{\BiRic}^a\geq 0 $, for some positive constant  a satisfying 
	\begin{equation*}
		\begin{split}
			a< \min \bigg\{\frac{4}{(n-1)p^2} +\frac{4(p-1)}{p^2} , \frac{n-1}{n} \bigg\} 
		\end{split}
	\end{equation*} 
	and 
	\begin{equation}\label{}
		\begin{split}
			\|S\|_{\frac{n}{2}} \leq  \frac{\frac{4}{(m-1)p^2} +\frac{4(p-1)}{p^2} -a}{C_s(\frac{n-1}{n}-a)}
		\end{split}
	\end{equation} 
 Let $u$ be an $  p $-harmonic function on  $ M. $  For $p \geq 2$, if $u$ has finite  $ p $-energy, then $ u $ must be a constant map.
\end{cor}

\begin{cor}
	Let $M^{n}(n \geq 2)$  be an $n$-dimensional a complete, noncompact, connected, oriented, and
	stable minimal  hypersurface immersed in a Riemann manifold $ \bar{M} $. Assume that a
	Sobolev inequality holds on M . For any $ p\geq 2, $ if $ \overline{\BiRic}^a\geq 0 $, for some positive constant  a satisfying 
	\begin{equation*}
		\begin{split}
			a< \min \bigg\{\frac{4}{(n-1)p^2} +\frac{4(p-1)}{p^2} , \frac{n-1}{n} \bigg\} 
		\end{split}
	\end{equation*} 
	and 
	\begin{equation}\label{}
		\begin{split}
			\|S\|_{\frac{n}{2}} \leq  \frac{\frac{4}{(n-1)p^2} +\frac{4(p-1)}{p^2} -a}{C_s(\frac{n-1}{n}-a)}
		\end{split}
	\end{equation} 
	Let $u:\left(M, g, d v_{g}\right) \rightarrow(N, h)$ be an p-harmonic map from an oriented complete noncompact manifold into a Riemannian manifold  and $K^{N} \leq 0$. For $p \geq 2$, if $u$ has finite  $ p $-energy, then $ u $ must be a constant map.
\end{cor}
\begin{proof}
	As in the proof of Corollary \ref{cor2},  we replace $ \frac{4}{(n-1)p^2} $ by zero in \eqref{boch1} and below.
\end{proof}

\end{proof}
\begin{thm}\label{wt}
 Let $M^n(n \geq 2)$ be a complete, noncompact, oriented, and stable immersed hypersurface in a Riemann manifold $\bar{M}$. Assume that $M$ satisfies a Sobolev inequality,
	$$
	\left(\int_M|f|^{\frac{2 n}{n-2}}\right)^{\frac{n-2}{n}} \leq C_s \int_M|\nabla f|^2,
	$$
	for any smooth compactly supported function $f$ in $M$. For any $p\geq 2$ , if $\overline{\operatorname{BiRic}}^a \geq 0$ for some positive constant a satisfying
\begin{equation*}
	\begin{split}
		a< \min \bigg\{\frac{4}{(n-1)p^2} +\frac{4(p-1)}{p^2} , \frac{\sqrt{n-1}}{2} \bigg\} 
	\end{split}
\end{equation*} 
and 
\begin{equation}\label{}
	\begin{split}
		\|S\|_{\frac{n}{2}} \leq  \frac{\frac{4}{(n-1)p^2} +\frac{4(p-1)}{p^2} -a}{C_s(\frac{\sqrt{n-1}}{2}-a)}
	\end{split}
\end{equation} 
	then there does not exist any nontrivial $L^p$ $ p $-harmonic 1-form on $M$.
\end{thm}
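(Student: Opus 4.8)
The plan is to rerun the argument of Theorem~\ref{p1.1} verbatim up to the integral estimate \eqref{563}, and then, since $M$ is no longer assumed minimal, to replace the identity $\Phi_{a}(H,S)=(\tfrac{n-1}{n}-a)S$ by the pointwise bound $\Phi_{a}(H,S)\le(\tfrac{\sqrt{n-1}}{2}-a)S$, which holds on an arbitrary hypersurface (it follows from $nH^{2}\le S$ and is the inequality already invoked in \cite[Corollary 3.4]{dung2020harmonic}). Concretely, working on $M^{+}=M\setminus\{\omega=0\}$, I would start from the refined Bochner inequality \eqref{boch1}, multiply by $\phi^{2}$ for a cutoff $\phi$ with $\phi\equiv1$ on $B_{R}(x_{0})$, $\operatorname{supp}\phi\subset B_{2R}(x_{0})$ and $|\nabla\phi|\le C/R$, integrate over $M$ and integrate by parts, use the stability inequality to absorb the term $a\int_{M}q\,\phi^{2}|\omega|^{p}$ with $q=\overline{\operatorname{Ric}}(N,N)+S$, estimate $\langle d(|\omega|^{p-2}\omega),d\omega\rangle$ via \cite[(9)]{2}, and apply Young's inequality with parameters $\epsilon,\epsilon_{3}$; this is exactly the computation that produces \eqref{563}.

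Since $\overline{\operatorname{BiRic}^{a}}\ge0$ and $\Phi_{a}(H,S)\le(\tfrac{\sqrt{n-1}}{2}-a)S$, the curvature term on the left of \eqref{563} satisfies $\overline{\operatorname{BiRic}^{a}}(\tfrac{X}{|X|},N)-\Phi_{a}(H,S)\ge-(\tfrac{\sqrt{n-1}}{2}-a)S$, so the left-hand side of \eqref{563} is bounded below by $-(\tfrac{\sqrt{n-1}}{2}-a)\int_{M}S\,\phi^{2}|\omega|^{p}$, where $\tfrac{\sqrt{n-1}}{2}-a>0$ by hypothesis. I would then control this integral exactly as in \cite{dung2020harmonic}, applying the Sobolev inequality \eqref{sobo} to $f=\phi|\omega|^{p/2}$ together with Hölder's inequality, which yields, for every $\epsilon>0$,
\[
\int_{M}S\,\phi^{2}|\omega|^{p}\le C_{s}\|S\|_{n/2}\Big(\big(1+\tfrac1\epsilon\big)\int_{M}|\nabla\phi|^{2}|\omega|^{p}+\tfrac{(1+\epsilon)p^{2}}{4}\int_{M}\phi^{2}|\omega|^{p-2}|\nabla|\omega||^{2}\Big),
\]
and then using $\int_{M}\phi^{2}|\omega|^{p-2}|\nabla|\omega||^{2}=\tfrac{4}{p^{2}}\int_{M}\phi^{2}|\nabla|\omega|^{p/2}|^{2}$.

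Feeding this back into \eqref{563}, every term carrying $\int_{M}\phi^{2}|\nabla|\omega|^{p/2}|^{2}$ collects with coefficient
\[
\epsilon\tfrac{4}{p^{2}}+(a+\epsilon_{3})-\tfrac{4}{(n-1)p^{2}}-\tfrac{4(p-1)}{p^{2}}+\big(\tfrac{\sqrt{n-1}}{2}-a\big)C_{s}\|S\|_{n/2}(1+\epsilon),
\]
while the remaining terms involve only $\int_{M}|\nabla\phi|^{2}|\omega|^{p}$ with a finite coefficient. The hypothesis on $\|S\|_{n/2}$ is precisely what makes the $\epsilon,\epsilon_{3}\to0$ limit of this coefficient nonpositive, so choosing $\epsilon,\epsilon_{3}$ small (and, in the equality case, perturbing slightly) renders it strictly negative. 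Letting $R\to\infty$, the boundary terms $\int_{M}|\nabla\phi|^{2}|\omega|^{p}\to0$ because $|\nabla\phi|\le C/R$ and $\omega\in L^{p}$, hence $\int_{M}|\nabla|\omega|^{p/2}|^{2}=0$ and $|\omega|$ is constant. Finally, a complete manifold carrying the Sobolev inequality \eqref{sobo} has infinite volume, so $\omega\in L^{p}$ forces $|\omega|\equiv0$.

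The step I expect to be the main obstacle is the sign bookkeeping in that combined coefficient: one must check that downgrading $\tfrac{n-1}{n}$ to $\tfrac{\sqrt{n-1}}{2}$ in the threshold for $\|S\|_{n/2}$ is exactly calibrated so that, after the Sobolev absorption and the $\epsilon$-perturbations, the coefficient of $\int_{M}\phi^{2}|\nabla|\omega|^{p/2}|^{2}$ is strictly negative while the constant multiplying $\int_{M}|\nabla\phi|^{2}|\omega|^{p}$ stays bounded; this is where the non-minimal estimate $\Phi_{a}(H,S)\le(\tfrac{\sqrt{n-1}}{2}-a)S$, the stability inequality, and the $L^{n/2}$-smallness of $S$ have to fit together. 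A secondary point worth spelling out is that a complete manifold satisfying \eqref{sobo} has infinite volume, which is what lets one upgrade ``$|\omega|$ is constant'' to $|\omega|\equiv0$.
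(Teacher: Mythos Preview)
Your proposal is correct and follows essentially the same route as the paper: the paper's proof of Theorem~\ref{wt} simply points back to the estimate \eqref{er} (which is \eqref{563} rewritten), invokes the pointwise bound $\Phi_{a}(H,S)\le(\tfrac{\sqrt{n-1}}{2}-a)S$ in place of the minimal-case identity, and then repeats verbatim the Sobolev absorption and infinite-volume argument from the preceding theorem. Your write-up is in fact more detailed than the paper's, which is terse to the point of containing a self-referential typo; the one loose end you flag yourself---the borderline case $\|S\|_{n/2}$ equal to the threshold, where the combined coefficient is only nonpositive rather than strictly negative---is not addressed in the paper either.
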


\begin{proof}
	In \eqref{er}, notice that 
	\begin{equation*}
		\begin{split}
			\Phi(H,s)\geq C_{n,a}=\frac{\sqrt{n-1}}{2}-a
		\end{split}
	\end{equation*}
The proof is the same as in Theorem \ref{wt}.
\end{proof}

\section{ Finiteness Theorem}

In this paper, we will use  the method of  \cite{2} and \cite{dung2020harmonic} to prove  that the space of $L^{p}$ harmonic 1-forms on $M$ is finite. Firstly, we consider the case where $ H\neq 0. $
\begin{thm}\label{p1.2}
	Let $\bar{M}$ be an $(n+1)$-dimensional Hadamard manifold with $\overline{\BiRic}^{a}$ satisfying $-k^{2} \leq \overline{\operatorname{BiRic}}^{a}$, where $  a $ is a given nonnegative real number, $k$ is a nonzero constant, and $3 \leq n \leq 4$. Let $M$ be a complete noncompact hypersurface with finite index that is immersed in $\bar{M}$. If one of the following two conditions hold,
	
 (1)	If  $\frac{2}{2-\sqrt{n-1}}<p<\min\{1+\sqrt{n-1},2\frac{n-2}{n-3}\}$, assume that
	\[
	\lambda_{1}(M)>\left(	\frac{2-p}{p}+ \frac{1}{n-1} \frac{2(p-1)}{p}-\epsilon \right) ^{-1}\left( \frac{p}{2p-2}+\frac{1}{a+1}\right)k^2.
	\]
where $ \epsilon $ is small positive constant.	

(2) If $   \max\{\frac{2}{2-\sqrt{n-1}},1+\sqrt{n-1}\} \leq p <3, $  assume that
	\[
	\lambda_{1}(M)>\left(\frac{2-p}{p}+\frac{2}{p(p-1)}-\epsilon \right)^{-1}\left( \frac{p}{2p-2}+\frac{1}{a+1}\right)k^2.
	\]
where $ \epsilon $ is small positive constant.	
	Then,
	\[
	\operatorname{dim} \mathcal{H}^{1}\left(L^{ p}(M)\right)<\infty,
	\]
	where $\mathcal{H}^{1}\left(L^{ p}(M)\right)$ denotes the space of $L^{p}$ harmonic 1-forms on $M$.
\end{thm}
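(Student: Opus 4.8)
The plan is to adapt the Li--Wang finite-dimensionality scheme, as used for $p$-harmonic forms in \cite{2} and \cite{dung2020harmonic}, in three stages: (i) a refined Bochner inequality for $L^p$ $p$-harmonic $1$-forms, built from the sharp Kato inequality (Lemma~\ref{kato}), the curvature comparison stated just before Theorem~\ref{p1.1}, the hypothesis $-k^2\leq\overline{\BiRic}^a$, and the fact that $\bar M$ is Hadamard; (ii) using finiteness of the index together with the lower bound on $\lambda_1(M)$ to show that every $\omega\in\mathcal{H}^1(L^p(M))$ has ``finite corrected energy with decay'', i.e.\ that $u:=|\omega|^{p/2}$ satisfies $\int_M|\nabla u|^2<\infty$ and $\int_{M\setminus B_R(x_0)}|\nabla u|^2\to0$ as $R\to\infty$; (iii) combining a local mean-value estimate for $u$ with Li's finite-dimensionality lemma to bound $\dim\mathcal{H}^1(L^p(M))$ by a constant depending only on $n$.

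For stage (i), write $u=|\omega|^{p/2}$. Combining \eqref{boch2}, the identity $\tfrac12\Delta|\omega|^{2(p-1)}=|\omega|^{p-1}\Delta|\omega|^{p-1}+|\nabla|\omega|^{p-1}|^2$, the Kato inequality $|\nabla(|\omega|^{p-2}\omega)|^2\geq(1+A_{p,n})|\nabla|\omega|^{p-1}|^2$, and the curvature comparison, one reaches (in the spirit of \eqref{boch1}) an inequality of the shape
\[
|\omega|^{p-1}\Delta|\omega|^{p-1}\ \geq\ c_1(p,n)\,|\nabla u|^2\ -\ \big\langle\delta d(|\omega|^{p-2}\omega),|\omega|^{p-2}\omega\big\rangle\ -\ \big(k^2+\Phi_a(H,S)+aS\big)|\omega|^p,
\]
where $c_1(p,n)$ is built from $A_{p,n}$ (the source of the dichotomy below), where $\overline{\Ric}(N,N)\leq0$ (Hadamard) has been used to discard $a\,\overline{\Ric}(N,N)|\omega|^p$ favourably, and where $\Phi_a(H,S)+aS\leq c(n)S$ reduces the curvature terms to $-k^2|\omega|^p$ plus a multiple of $S|\omega|^p$. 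The $\delta d$-term is treated exactly as in the proof of Theorem~\ref{p1.1}: by \cite[(9)]{2} and Young's inequality, once multiplied by a cutoff it contributes controlled multiples of $\int\phi^2|\nabla u|^2$ and of $\int|\nabla\phi|^2|\omega|^p$.

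For stage (ii), finiteness of the index yields a compact $K\subset M$ outside which $M$ is stable, so $\int_M(|\nabla\varphi|^2-(S+\overline{\Ric}(N,N))\varphi^2)\geq0$ for $\varphi$ supported in $M\setminus K$ (the contribution of the compact piece $K$ is finite and disappears in the limit $R\to\infty$). Multiply the inequality from stage (i) by $\phi^2$, with $\phi\equiv1$ on $B_R(x_0)$, $\phi\equiv0$ off $B_{2R}(x_0)$, $|\nabla\phi|\leq C/R$, integrate by parts, feed the $S$-term into the stability inequality (with $\varphi=\phi u$), and feed $\int\phi^2|\omega|^p$ into the Poincar\'e inequality $\lambda_1(M)\int\psi^2\leq\int|\nabla\psi|^2$ (with $\psi=\phi u$). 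Choosing the Young parameters appropriately (the free $\epsilon$ enters here) leaves
\[
\Big(C(p,n,a)-\epsilon-\frac{\big(\tfrac{p}{2p-2}+\tfrac{1}{a+1}\big)k^2}{\lambda_1(M)}\Big)\int_M\phi^2|\nabla u|^2\ \leq\ \frac{C'}{R^2}\int_{B_{2R}\setminus B_R}|\omega|^p,
\]
with $C(p,n,a)=\tfrac{2-p}{p}+\tfrac{2(p-1)}{(n-1)p}$ in case (1) and $C(p,n,a)=\tfrac{2-p}{p}+\tfrac{2}{p(p-1)}$ in case (2). The admissible ranges of $p$ and the stated lower bounds on $\lambda_1(M)$ are exactly what make the bracketed coefficient strictly positive; the dichotomy between (1) and (2) reflects the two regimes of the sharp Kato constant, $A_{p,n}=\tfrac1{n-1}$ when $(p-1)^2\leq n-1$ and $A_{p,n}=\tfrac1{(p-1)^2}$ when $(p-1)^2>n-1$, and $3\leq n\leq4$ is what keeps the resulting $p$-intervals nonempty and compatible with $p<2\tfrac{n-2}{n-3}$. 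Since $\omega\in L^p$ the right side is finite and tends to $0$ as $R\to\infty$, hence $\int_M|\nabla u|^2<\infty$; running the same estimate with $\phi$ supported in $B_{2R}(x_0)\setminus B_{R_0}(x_0)$ then gives $\int_{M\setminus B_{R_0}(x_0)}|\nabla u|^2\to0$ as $R_0\to\infty$.

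For stage (iii), discarding the favourable terms in stage (i) shows $u=|\omega|^{p/2}$ satisfies $u\Delta u\geq-Cu^2$ weakly, so a De Giorgi--Nash--Moser iteration gives a local mean-value estimate $\sup_{B_{x_0}(r)}u^2\leq C(r,n)\int_{B_{x_0}(2r)}u^2$. One then invokes Li's lemma: for any finite-dimensional subspace $V\subset\mathcal{H}^1(L^p(M))$ and any $r>0$ there is $\omega\in V$ with $\dim V\cdot\int_{B_{x_0}(r)}|\omega|^p\leq c(n)\,\vol(B_{x_0}(r))\sup_{B_{x_0}(r)}|\omega|^p$; combining with the mean-value estimate and with the decay from stage (ii) (which, after choosing $r$ large depending on $V$, forces $\int_{B_{x_0}(r)}|\omega|^p\geq\tfrac12\int_M|\omega|^p$ for every $\omega\in V$) yields $\dim V\leq C(n)$, hence $\dim\mathcal{H}^1(L^p(M))<\infty$. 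The main obstacle is stage (ii): the careful bookkeeping needed to combine the refined Bochner inequality, the stability inequality on $M\setminus K$, the $\lambda_1$-Poincar\'e inequality and the several applications of Young's inequality so that the coefficient of $\int\phi^2|\nabla u|^2$ stays strictly positive — this balancing is precisely what forces the admissible range of $p$, the restriction $3\leq n\leq4$, and the explicit lower bounds on $\lambda_1(M)$; tracking the compact core $K$ so that it does not spoil the $R\to\infty$ limit is the remaining delicate point.
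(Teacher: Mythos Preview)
Your stages (i)--(ii) track the paper's argument closely, but there are two genuine gaps.

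\textbf{The lower bound on $p$ is unaccounted for.} You say the admissible $p$-range is ``exactly what makes the bracketed coefficient strictly positive''; this explains the upper bounds $p<2\tfrac{n-2}{n-3}$ and $p<3$ (the two Kato regimes), but not the lower bound $p>\tfrac{2}{2-\sqrt{n-1}}$. In the paper, after combining Bochner and the stability inequality one does not simply ``feed the $S$-term into stability''; rather, stability is applied once with $\varphi=\phi u$ and, after the Bochner term $\tfrac{p}{2p-2}\Phi_a(H,S)\leq\tfrac{p}{2p-2}\cdot\tfrac{\sqrt{n-1}}{2}S$ is brought over, a residual $S$-term survives with coefficient $\tfrac{p}{2p-2}\tfrac{\sqrt{n-1}}{2}-1$. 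The lower bound on $p$ is precisely the condition making this coefficient negative, so that (since $S\geq0$) the term can be dropped. You will not recover the theorem's hypotheses without isolating this sign condition.

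\textbf{Stage (iii) is circular.} You propose to choose $r$ large \emph{depending on $V$} so that $\int_{B_r}|\omega|^p\geq\tfrac12\int_M|\omega|^p$ for all $\omega\in V$, and then conclude $\dim V\leq C(r)$ via Li's lemma and the mean-value estimate. But $C(r)$ involves $\Vol(B_r)$ and the local Moser constant on $B_{2r}$, both of which grow with $r$; since $r=r(V)$, the resulting bound on $\dim V$ depends on $V$ and gives no uniform finiteness. The paper avoids this by proving, for a \emph{fixed} radius $R$ (chosen so that $M\setminus B_R$ is stable), a uniform doubling inequality $\int_{B_{R+2}}|\omega|^p\leq P_2\int_{B_{R+1}}|\omega|^p$ with $P_2$ independent of $\omega$. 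This requires an extra step you omit: the Hoffman--Spruck Sobolev inequality on the immersed hypersurface is applied to $\phi u$ to get an $L^{2n/(n-2)}$ bound, and then H\"older gives the doubling. The Hoffman--Spruck inequality carries an $H^2$ term; it is absorbed using $|H|^2\leq nS$ together with the negative residual $S$-coefficient above (this is the second place the lower bound on $p$ is needed). Without this Sobolev--H\"older step your passage from the energy estimate to finite dimension does not close.
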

\begin{rem}
	The conclusion is different from that in  \cite{dung2020harmonic} since we consider the space of $L^{p}$ $ p $-harmonic 1-forms on $M$ instead of  the space of $L^{2p}$ harmonic 1-forms.
	Although in \cite{2}, Han consider also the finiteness problem of the space of $L^{p}$ $ p $-harmonic 1-forms, the assumptions in our theorem is different from that in \cite{2}.
\end{rem}
\begin{proof}
	
By Lemma \eqref{kato}, it is easy to see that

\begin{equation}\label{}
	\begin{split}
				|\omega|^{p-1}\Delta |\omega|^{p-1}\geq& A_{p,n}|\nabla| \omega|^{p-1} |^{2}-\bigg\langle \delta d(|\omega|^{p-2}\omega),|\omega|^{p-2}\omega\bigg\rangle\\
		+&|\omega|^{2p-4}\left(\overline{\operatorname{BiRic}^{a}}\left(\frac{X}{|X|}, N\right)-\Phi_{a}(H, S)-a(\overline{\operatorname{Ric}}(N, N)+S)\right)|\omega|^{2}.
	\end{split}
\end{equation}

Computing directly, we have
\begin{equation}\label{eq3}
	\begin{split}
	&	|\omega|^{\frac{p}{2}} \Delta|\omega|^{\frac{p}{2}}\\
		=&\frac{2-p}{p}|\nabla| \omega|^{\frac{p}{2}}|^{2}+\frac{p}{2p-2}|\omega|^{2-p}\left( |\omega|^{p-1} \Delta|\omega|^{p-1}\right) \\
		=&\frac{2-p}{p}|\nabla| \omega|^{\frac{p}{2}}|^{2}+\frac{p}{2p-2}|\omega|^{2-p}\bigg[ A_{p,n}|\nabla| \omega|^{p-1} |^{2}-\bigg\langle \delta d(|\omega|^{p-2}\omega),|\omega|^{p-2}\omega\bigg\rangle\\
		&+|\omega|^{2p-4}\left(\overline{\operatorname{BiRic}^{a}}\left(\frac{X}{|X|}, N\right)-\Phi_{a}(H, S)-a(\overline{\operatorname{Ric}}(N, N)+S)\right)|\omega|^{2}\bigg] \\
		=&	\frac{2-p}{p}|\nabla| \omega|^{\frac{p}{2}}|^{2}+\frac{p}{2p-2} A_{p,n} \frac{4(p-1)^2}{p^2}|\nabla| \omega|^{\frac{p}{2}} |^{2}-\frac{p}{2p-2}\bigg\langle \delta d(|\omega|^{p-2}\omega),\omega\bigg\rangle\\
		&+\frac{p}{2p-2}\left(\overline{\operatorname{BiRic}^{a}}\left(\frac{X}{|X|}, N\right)-\Phi_{a}(H, S)-a(\overline{\operatorname{Ric}}(N, N)+S)\right)|\omega|^{p}\\
		\geq& 	\left( \frac{2-p}{p}+\frac{p}{2p-2} A_{p,n} \frac{4(p-1)^2}{p^2}\right) |\nabla| \omega|^{\frac{p}{2}} |^{2}-\frac{p}{2p-2}\bigg\langle \delta d(|\omega|^{p-2}\omega),\omega\bigg\rangle\\
		&+\frac{p}{2p-2}\left(-k^2-\frac{\sqrt{n-1}}{2}S\right)|\omega|^{p}.
	\end{split}
\end{equation}
%
%

By the stable condition, we have 
\begin{equation}\label{eee}
	\begin{split}
		\int_{M \backslash B_{R}(o)}(\overline{\operatorname{Ric}}(N)+S) \varphi^{2}|\omega|^{ p}  \leq& \int_{M \backslash B_{R}(o)}\left|\nabla\left(\varphi|\omega|^{\frac{p}{2}}\right)\right|^{2} \\
		=&\int_{M \backslash B_{R}(o)} \varphi^{2}|\nabla| \omega|^{\frac{p}{2}}|^{2}+\int_{M \backslash B_{R}(o)}|\nabla \varphi|^{2}|\omega|^{p} \\
		&+2 \int_{M \backslash B_{R}(o)} \varphi|\omega|^{\frac{p}{2}}\left\langle\nabla \varphi, \nabla|\omega|^{\frac{p}{2}}\right\rangle.
	\end{split}
\end{equation}

By \eqref{eee} and divergence theorem,  we have 
\begin{equation}\label{}
	\begin{split}
		&\int_{M \backslash B_{R}(o)}\left(\frac{-k^{2}}{a+1}+S\right) \varphi^{2}|\omega|^{ p}\\
		 \leq &\int_{M \backslash B_{R}(o)}|\nabla \varphi|^{2}|\omega|^{ p}-\int_{M \backslash B_{R}(o)} \varphi^{2}|\omega|^{\frac{p}{2}} \Delta|\omega|^{\frac{p}{2}}\\
		 \leq &\int_{M \backslash B_{R}(o)}|\nabla \varphi|^{2}|\omega|^{p}
		 -\int_{M \backslash B_{R}(o)} \varphi^{2}	\left( \frac{2-p}{p}+\frac{p}{2p-2} A_{p,n} \frac{4(p-1)^2}{p^2}\right) |\nabla| \omega|^{\frac{p}{2}} |^{2}\\
		 &+\int_{M \backslash B_{R}(o)}\frac{p}{2p-2}\varphi^{2}\bigg\langle \delta d(|\omega|^{p-2}\omega),\omega\bigg\rangle
		 +\int_{M \backslash B_{R}(o)}\frac{p}{2p-2}\varphi^{2}\left(k^2+\frac{\sqrt{n-1}}{2}S\right)|\omega|^{p},
	\end{split}
\end{equation}
where in the second inequality, we have used \eqref{eq3}.

Thus, we get
\begin{equation}\label{e64}
	\begin{split}
		&\int_{M \backslash B_{R}(o)} \varphi^{2}	\left( \frac{2-p}{p}+\frac{p}{2p-2} A_{p,n} \frac{4(p-1)^2}{p^2}\right) |\nabla| \omega|^{\frac{p}{2}} |^{2}\\
		\leq& \int_{M \backslash B_{R}(o)}|\nabla \varphi|^{2}|\omega|^{p}+\int_{M \backslash B_{R}(o)}\frac{p}{2p-2}\varphi^{2}\bigg\langle \delta d(|\omega|^{p-2}\omega),\omega\bigg\rangle
	\\
	&+\int_{M \backslash B_{R}(o)}\varphi^{2}\left(\frac{p}{2p-2}\frac{\sqrt{n-1}}{2}-1\right)S|\omega|^{p}+\int_{M \backslash B_{R}(o)}\left( \frac{p}{2p-2}+\frac{1}{a+1}\right) \varphi^{2}k^2|\omega|^{p}.
	\end{split}
\end{equation}

However, the second term on the right hand side, by Lemma \ref{1121}, we get(c.f. \cite{zbMATH06827101}) 
\begin{align}\label{dhfhjdhf}
	&\frac{p}{2p-2}\int_{M}\bigg\langle d ( |\omega|^{p-2}\omega), d  \omega\bigg\rangle  \nonumber \\
	&\leq \frac{2(p-2)}{p-1}\int_{M} \Big| \nabla |\omega|^{\frac{p}{2}}\Big|  \phi |\nabla \phi||\omega|^{\frac{p}{2}}.
\end{align}

Thus, we get 

\begin{equation}\label{ew}
	\begin{split}
	&\left( \frac{2-p}{p}+\frac{p}{2p-2} A_{p,n} \frac{4(p-1)^2}{p^2}-\epsilon\right)	\int_{M \backslash B_{R}(o)} \varphi^{2}	 |\nabla| \omega|^{\frac{p}{2}} |^{2}\\
		\leq& (1+\frac{(p-2)^2}{\epsilon (p-1)^2})\int_{M \backslash B_{R}(o)}|\nabla \varphi|^{2}|\omega|^{p}
					+\int_{M \backslash B_{R}(o)} \varphi^{2}|\omega|^{p}\\
					&+\left(\frac{p}{2p-2}\frac{\sqrt{n-1}}{2}-1\right)\int_{M \backslash B_{R}(o)}\varphi^{2}S|\omega|^{p}.
	\end{split}
\end{equation}
As  in \cite{dung2020harmonic}, we also have 
\begin{equation}\label{}
	\begin{split}
				\int_{M \backslash B_{R}(o)} \varphi^{2}|\omega|^{p} & \leq \frac{1}{\lambda_{1}(M)} \int_{M \backslash B_{R}(o)}\left|\nabla\left(\varphi|\omega|^{\frac{p}{2}}\right)\right|^{2} \\
			& \leq \frac{1}{\lambda_{1}(M)}\left(1+\frac{1}{\epsilon}\right) \int_{M \backslash B_{R}(o)}|\nabla \varphi|^{2}|\omega|^{ p}+\frac{1+\epsilon}{\lambda_{1}(M)} \int_{M \backslash B_{R}(o)} \varphi^{2}|\nabla| \omega|^{\frac{p}{2}}|^{2}.
			\end{split}
\end{equation}
So we get
\begin{equation}\label{wq}
	\begin{split}
	&	\left[ 1-B^{-1}\left( \frac{p}{2p-2}+\frac{1}{a+1}\right)k^2\frac{1+\epsilon}{\lambda_{1}(M)}\right]	\int_{M \backslash B_{R}(o)} \varphi^{2}	 | \omega|^{p} \\
		\leq & \bigg[\frac{1}{\lambda_{1}(M)}\left(1+\frac{1}{\epsilon}\right)+\frac{1+\epsilon}{\lambda_{1}(M)}B^{-1}(1+\frac{(p-2)^2}{\epsilon (p-1)^2})\bigg]\int_{M \backslash B_{R}(o)}|\nabla \varphi|^{2}|\omega|^{p},
	\end{split}
\end{equation}
where $ B=\frac{2-p}{p}+\frac{p}{2p-2} A_{p,n} \frac{4(p-1)^2}{p^2}-\epsilon. $ It suffice to prove that 
\begin{equation*}
	\begin{split}
		\frac{2-p}{p}+\frac{p}{2p-2} A_{p,n} \frac{4(p-1)^2}{p^2}>0.
	\end{split}
\end{equation*}
which  is 

\begin{equation}\label{g}
	\begin{split}
		\frac{2-p}{p}+\frac{p}{2p-2}  \frac{4(p-1)^2}{p^2}	\frac{1}{(p-1)^2}\min\{1, \frac{(p-1)^2}{n-1}\}>0
	\end{split}
\end{equation}

If $ 2\leq p \leq 1+\sqrt{n-1},$
\begin{equation}\label{e}
	\begin{split}
		\frac{2-p}{p}+\frac{p}{2p-2} \frac{1}{n-1} \frac{4(p-1)^2}{p^2}>0.
	\end{split}
\end{equation}
which holds if $ p < 2\frac{n-2}{n-3}$. Thus, we can choose $ \epsilon $ such that $ B>0. $

If  $ p\geq 1+\sqrt{n-1}, $ we have 
\begin{equation}\label{f}
	\begin{split}
		\frac{2-p}{p}+\frac{p}{2p-2}  \frac{4}{p^2}=\frac{2-p}{p}+\frac{2}{p(p-1)}  >0.
	\end{split}
\end{equation}
which  holds if $ p<3 $. Thus, we can choose $ \epsilon $ such that $ B>0. $

Take $r_{0}>r_{1},$ we can choose cutoff fuction $\phi$ with supp $\phi \subseteq M-B_{x_{0}}(r_{1})$,  and  sufficiently small  $\epsilon_3$, 
which implies that
\begin{align}\label{897}
	\int_{M}|\nabla |\omega|^{\frac{p}{2}}|^{2}\phi^{2}
	\leq \widetilde{D}\int_{M}|\nabla \phi|^{2} |\omega|^{p},
\end{align}
where $\widetilde{D}$ is a constant depending on $ p $. Using the Young inequality   and  the Hoffman-Spruck inequality(cf.\cite{6}) , as in \cite{dung2020harmonic}, we also have 
\begin{equation}\label{8753}
	\begin{split}
		\left(\int_{M \backslash B_{R}(o)}\left(\varphi|\omega|^{\frac{p}{2}}\right)^{\frac{2 n}{n-2}}\right)^{\frac{n-2}{n}} & \leq C_{s} \int_{M \backslash B_{R}(o)}\left|\nabla\left(\varphi|\omega|^{\frac{p}{2}}\right)\right|^{2}+C_{s} \int_{M \backslash B_{R}(o)} \varphi^{2}|\omega|^{ p} H^{2} \\
		& \leq(1+s) C_{s} \int_{M \backslash B_{R}(o)} \varphi^{2}|\nabla| \omega|^{\frac{p}{2}}|^{2}+\left(1+\frac{1}{s}\right) C_{s} \int_{M \backslash B_{R}(o)}|\nabla \varphi|^{2}|\omega|^{ p} \\
		&+C_{s} \int_{M \backslash B_{R}(o)} \varphi^{2}|\omega|^{p} H^{2}
	\end{split}
\end{equation}
where $ C_s $ is a constant in Sobolev inequality.

By \eqref{ew} and \eqref{8753}, 
\begin{equation}\label{238}
	\begin{split}
		&\left(\int_{M \backslash B_{R}(o)}\left(\varphi|\omega|^{\frac{p}{2}}\right)^{\frac{2 n}{n-2}}\right)^{\frac{n-2}{n}}\\
		\leq& A^{-1}(1+s)C_s\bigg[1+\frac{(p-2)^2}{4\epsilon (p-1)^2}\int_{M \backslash B_{R}(o)}|\nabla \varphi|^{2}|\omega|^{p}
		+\left( \frac{p}{2p-2}+\frac{1}{a+1}\right)k^2\int_{M \backslash B_{R}(o)} \varphi^{2}|\omega|^{p}\\
		&+\left(\frac{p}{2p-2}\frac{\sqrt{n-1}}{2}-1\right)\int_{M \backslash B_{R}(o)}\varphi^{2}S|\omega|^{p}\bigg]\\
		&+\left(1+\frac{1}{s}\right) C_{s} \int_{M \backslash B_{R}(o)}|\nabla \varphi|^{2}|\omega|^{ p} +C_{s} \int_{M \backslash B_{R}(o)} \varphi^{2}|\omega|^{p} H^{2}
	\end{split}
\end{equation}
Using $ |H |^2\leq n S $, and 
\begin{equation*}
	\begin{split}
		\frac{p}{2p-2}\frac{\sqrt{n-1}}{2}-1<0
	\end{split}
\end{equation*}
we can choose sufficiently large $ s $ such that 
\begin{equation*}
	\begin{split}
	A^{-1}(1+s)\left( 	\frac{p}{2p-2}\frac{\sqrt{n-1}}{2}-1\right) +n \leq 0.
	\end{split}
\end{equation*}
It follows from \eqref{238},
\begin{equation}\label{}
	\begin{split}
		&\left(\int_{M \backslash B_{R}(o)}\left(\varphi|\omega|^{\frac{p}{2}}\right)^{\frac{2 n}{n-2}}\right)^{\frac{n-2}{n}}\\
		\leq &A^{-1}(1+s)C_s\bigg[1+\frac{(p-2)^2}{4\epsilon (p-1)^2}\int_{M \backslash B_{R}(o)}|\nabla \varphi|^{2}|\omega|^{p}
		+\left( \frac{p}{2p-2}+\frac{1}{a+1}\right)k^2\int_{M \backslash B_{R}(o)} \varphi^{2}|\omega|^{p}
		\bigg]\\
		&+\left(1+\frac{1}{s}\right) C_{s} \int_{M \backslash B_{R}(o)}|\nabla \varphi|^{2}|\omega|^{ p} \\
		\end{split}
\end{equation}
By \eqref{wq}, we get
\begin{align*}
	&(\int_{M \backslash B_{R}(o)}(\phi|\omega|^{\frac{p}{2}})^{\frac{2m}{m-2}}dx)^{\frac{m-2}{m}}
	\leq C_1\int_{M \backslash B_{R}(o)}|\nabla\phi|^{2}|\omega|^{p}.
\end{align*}

As in \cite{dung2020harmonic}, we  choose cutoff function  $\phi  $ on   $M$ such that for $ r> R+1 $
\begin{equation}
	\begin{cases}
		&  \text{  $0\leq \phi\leq 1$ } \\
		&  \text{$\phi=1$, on $B_{r}(o)\backslash B_{R+1}(o)$    }\\
		&  \text{ $\phi=0$, on $  B_R(o)\cup \left( M- B_{2r}(x_{0})\right) $, }\\
	&\text{$ |\nabla\phi| < C_2$, on $B_{R+1}(o)\backslash B_{R}(o)$  },\\
		&    \text{$ |\nabla\phi| < \frac{C_2}{R}$, on $B_{2r}(o)\backslash B_{r}(o)$  },
	\end{cases}   
\end{equation}
According to the definiton of $\phi$, we have
\begin{align*}
	&\bigg(\int_{B_{r}(o)\backslash B_{R+1}(o)}(|\omega|^{\frac{p}{2}})^{\frac{2m}{m-2}}dx\bigg)^{\frac{m-2}{m}}\leq P_{1}\int_{B_{R+1}(o)\backslash B_{R}(o)}|\omega|^{p}+\frac{P_{1}}{r^{2}}\int_{B_{2r}(o)\backslash B_{r}(o)}|\omega|^{p}.
\end{align*}
Then let $r\rightarrow \infty$, we have
\begin{align*}
	&\bigg(\int_{M\backslash B_{R+1}(o)}(|\omega|^{\frac{p}{2}})^{\frac{2m}{m-2}}dx\bigg)^{\frac{m-2}{m}}
	\leq P_{1}\int_{B_{R+1}(o)\backslash B_{R}(o))}|\omega|^{p}.
\end{align*}

It follows from the H\"{o}lder inequality(cf. Formula (30) in \cite{1} or \cite[(4.16)(4.17)]{dung2020harmonic}) that
\begin{align}\label{890}
	\int_{B_{R+2}(o)}|\omega|^{p}\leq  P_{2}\int_{B_{R+1}(o)}|\omega|^{p},
\end{align}
where $P_{2}$ depends on $ Vol(B_{R+2}(o)),m,p.$

Recall we have proved the following inequality,
\begin{equation*}
	\begin{split}
		|\omega|\Delta |\omega|^{p-1}= \frac{4}{(m-1)p^2}|\nabla| \omega|^{\frac{p}{2}} |^{2}-\bigg\langle \delta d(|\omega|^{p-2}\omega),\omega\bigg\rangle
		+|\omega|^{p}\alpha.
	\end{split}
\end{equation*}
where $ \alpha= \overline{\operatorname{BiRic}^{a}}\left(\frac{X}{|X|}, N\right)-\Phi_{a}(H, S)-a(\overline{\operatorname{Ric}}(N, N)+S).$  Once we have \eqref{897}\eqref{890}, the remained step is the sama as that in \cite{2} noticing that the difference of $ \alpha $ from that in \cite[(32)]{2} doesn't affect the correctness of the statement. Finally, let $V$ be any finite-dimensional subspace of $H^{1}(L^{p}(M))$. 

 By  Lemma  2.2 in \cite{2}, there exists $ \omega\in V $ such that
\begin{equation*}
	\mathrm{dim}(V) \int_{B_{R+1}(o)}|\omega|^{p} \leq \vol( B_{R+1}(o) )  \min(C_{p}(_{q}^{m}),dim V )\sup_{B_{(R+1)}(o)} |\omega|^{p}.
\end{equation*}
Thus
\begin{equation*}
	dim(V) \leq C.
\end{equation*}
where $ C $ depends on $ \Vol( B_{R+1}(o)) ,m,p, H, \alpha $

\end{proof}

Next, we consider the case where $ H=0. $
\begin{thm}
	Let $\bar{M}$ be an $(n+1)$-dimensional Hadamard manifold with $\overline{{\BiRic}}^{a}$ satisfying $-k^{2} \leq \overline{\operatorname{BiRic}}^{a}$, where a is a given nonnegative real number, $k$ is a nonzero constant. Let $M$ be a complete noncompact minimal hypersurface with finite index that is immersed in $\bar{M}$. 
	If one of the following two conditions hold, 
	
(1) $2 \leq p<\min\{1+\sqrt{n-1},4\frac{n}{n-1}, 2\frac{n-2}{n-3}\}$, assume that
\begin{equation*}
	\begin{split}
		\lambda_{1} > \left( \frac{2-p}{p}+\frac{2(p-1)^2}{p(n-1)} -\bigg(\frac{p}{2p-2}\left(\frac{n-1}{n}\right)-1\bigg)\right)^{-1} \left( \frac{p}{2p-2}k^2+\frac{p}{2p-2}\left(\frac{n-1}{n}\right) \frac{k^2}{a+1}\right) 
	\end{split}
\end{equation*}

(2) If $1+\sqrt{n-1}\leq p <3  ,$ we assume that 
 \begin{equation*}
	\begin{split}
		\lambda_{1} > \left( \frac{2-p}{p}+ \frac{2}{p(p-1)}-\bigg(\frac{p}{2p-2}\left(\frac{n-1}{n}\right)-1\bigg)\right)^{-1} \left( \frac{p}{2p-2}k^2+\frac{p}{2p-2}\left(\frac{n-1}{n}\right) \frac{k^2}{a+1}\right) 
	\end{split}
\end{equation*}
	Then,
	\[
	\operatorname{dim} \mathcal{H}^{1}\left(L^{ p}(M)\right)<\infty,
	\]
	where $\mathcal{H}^{1}\left(L^{ p}(M)\right)$ denotes the space of $L^{p}$ harmonic 1-forms on $M$.
\end{thm}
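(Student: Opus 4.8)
The plan is to run the scheme of Theorem~\ref{p1.2} in the minimal case, exploiting the two simplifications that $H=0$ forces: the penalty term collapses to $\Phi_a(H,S)=\big(\tfrac{n-1}{n}-a\big)S$, and the Hoffman--Spruck Sobolev inequality for minimal submanifolds of a Hadamard manifold holds with no $H^2$ correction, i.e.\ $\big(\int_M|f|^{\frac{2n}{n-2}}\big)^{\frac{n-2}{n}}\le C_s\int_M|\nabla f|^2$ for compactly supported $f$ (cf.~\cite{6}). Since $M$ has finite index it is stable on $M\setminus B_R(o)$ for some fixed large $R$, so, exactly as in the proof of Theorem~\ref{p1.2}, every estimate below is carried out on $M\setminus B_R(o)$ with cutoffs $\varphi$ supported there, and $\lambda_1(M)$ enters through the Poincar\'e inequality valid outside a compact set.

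First I would combine the Bochner formula \eqref{boch2}, Kato's inequality (Lemma~\ref{kato}), and the identity $|\omega|^{\frac p2}\Delta|\omega|^{\frac p2}=\tfrac{2-p}{p}|\nabla|\omega|^{\frac p2}|^2+\tfrac{p}{2p-2}|\omega|^{2-p}\big(|\omega|^{p-1}\Delta|\omega|^{p-1}\big)$, exactly as in \eqref{eq3}, inserting $\Phi_a=(\tfrac{n-1}{n}-a)S$ and $\overline{\BiRic}^a\ge-k^2$, to get
\begin{equation*}
|\omega|^{\frac p2}\Delta|\omega|^{\frac p2}\ \ge\ B_0\,|\nabla|\omega|^{\frac p2}|^2-\tfrac{p}{2p-2}\big\langle\delta d(|\omega|^{p-2}\omega),\omega\big\rangle+\tfrac{p}{2p-2}\Big(-k^2-\big(\tfrac{n-1}{n}-a\big)S-a\,q\Big)|\omega|^p,
\end{equation*}
where $B_0=\tfrac{2-p}{p}+\tfrac{p}{2p-2}A_{p,n}\tfrac{4(p-1)^2}{p^2}$ and $q=\overline{\Ric}(N,N)+S$. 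Applying the stability inequality to the test function $\varphi|\omega|^{\frac p2}$, namely $\int q\varphi^2|\omega|^p\le\int|\nabla(\varphi|\omega|^{\frac p2})|^2$, then expanding and combining with the divergence theorem applied to $\int\varphi^2|\omega|^{\frac p2}\Delta|\omega|^{\frac p2}$, I integrate the displayed inequality over $M\setminus B_R(o)$. The cross term is absorbed through Lemma~\ref{1121} and the $p$-harmonic refinement $\tfrac{p}{2p-2}\int\langle d(|\omega|^{p-2}\omega),d\omega\rangle\le\tfrac{2(p-2)}{p-1}\int\big|\nabla|\omega|^{\frac p2}\big|\,\varphi|\nabla\varphi||\omega|^{\frac p2}$ of \cite{zbMATH06827101} followed by Young's inequality; the $S|\omega|^p$ term has the favourable sign $\tfrac{p}{2p-2}\tfrac{n-1}{n}-1<0$ (valid for all $p\ge 2$) and is discarded, while the $k^2$ and $aq$ contributions are tracked exactly as in Theorem~\ref{p1.2}, producing the $k^2\big(\tfrac{p}{2p-2}+\tfrac1{a+1}\big)$ coefficient. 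Coupling the result with the eigenvalue estimate $\int_{M\setminus B_R(o)}\varphi^2|\omega|^p\le\lambda_1(M)^{-1}\int_{M\setminus B_R(o)}|\nabla(\varphi|\omega|^{\frac p2})|^2$, and using that the prescribed lower bound on $\lambda_1(M)$ makes the coefficient of $\int\varphi^2|\omega|^p$ strictly positive together with
\[
B:=B_0-\big(\tfrac{p}{2p-2}\tfrac{n-1}{n}-1\big)-\epsilon>0
\]
(which, substituting $A_{p,n}=\tfrac1{n-1}$ for $2\le p<1+\sqrt{n-1}$ and $A_{p,n}=\tfrac1{(p-1)^2}$ for $p\ge1+\sqrt{n-1}$, is exactly the stated $p$-range, with $2\tfrac{n-2}{n-3}$ and $4\tfrac n{n-1}$ the thresholds keeping the relevant coefficients positive here and in the iteration), I obtain the Caccioppoli inequality
\begin{equation*}
\int_{M\setminus B_R(o)}\varphi^2\big|\nabla|\omega|^{\frac p2}\big|^2\ \le\ \widetilde D\int_{M\setminus B_R(o)}|\nabla\varphi|^2|\omega|^p .
\end{equation*}

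With this in hand I would substitute $\varphi|\omega|^{\frac p2}$ into the minimal Sobolev inequality, absorb $\int\varphi^2\big|\nabla|\omega|^{\frac p2}\big|^2$ using the Caccioppoli estimate, and obtain $\big(\int_{M\setminus B_R(o)}(\varphi|\omega|^{\frac p2})^{\frac{2n}{n-2}}\big)^{\frac{n-2}{n}}\le C_1\int_{M\setminus B_R(o)}|\nabla\varphi|^2|\omega|^p$. A De~Giorgi--Nash--Moser iteration on annuli, as in \cite{2}, then bounds $\sup_{B_{R+1}(o)}|\omega|^p$ by $\int_{B_{R+2}(o)}|\omega|^p$, and a reverse-H\"older step (Formula~(30) in \cite{1}, or \cite[(4.16)--(4.17)]{dung2020harmonic}) replaces $B_{R+2}(o)$ by $B_{R+1}(o)$. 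Finally, for any finite-dimensional subspace $V\subseteq\mathcal H^1(L^p(M))$, Lemma~2.2 of \cite{2} furnishes $\omega\in V$ with $\dim V\int_{B_{R+1}(o)}|\omega|^p\le C\,\vol(B_{R+1}(o))\sup_{B_{R+1}(o)}|\omega|^p$; combined with the sup estimate this gives $\dim V\le C'$ for a constant independent of $V$, hence $\dim\mathcal H^1(L^p(M))<\infty$.

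I expect the main obstacle to be the parameter bookkeeping. One must choose the small constant $\epsilon$ in $B$, the Young parameter in the cross-term estimate, and the parameter $s$ in the Sobolev iteration so that $B$ stays strictly positive — which is what pins down the precise $p$-intervals and the auxiliary thresholds $1+\sqrt{n-1}$, $2\tfrac{n-2}{n-3}$, $4\tfrac n{n-1}$ — while still leaving enough quantitative slack for the prescribed lower bound on $\lambda_1(M)$ to absorb both the Poincar\'e term and the $k^2\big(\tfrac{p}{2p-2}+\tfrac1{a+1}\big)$ contribution. This is the same balancing act as in the $H\ne0$ theorem, but streamlined by the vanishing of the $H^2$ term and with $\Phi_a$ replaced throughout by $\big(\tfrac{n-1}{n}-a\big)S$.
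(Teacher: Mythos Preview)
Your outline has the right skeleton (Bochner--Kato, stability, Poincar\'e, Sobolev, Moser iteration, Lemma~2.2 of \cite{2}), but the way you propose to handle the curvature terms does not reproduce the coefficients in the stated $\lambda_1$--condition, and this is not just bookkeeping noise---it reflects a different mechanism from the one the paper actually uses.

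Concretely, there are two mismatches. First, in your displayed differential inequality the $S$--coefficient is $-\tfrac{p}{2p-2}\big(\tfrac{n-1}{n}-a\big)$, not $\tfrac{p}{2p-2}\tfrac{n-1}{n}-1$; the latter does not come from ``discarding'' anything, and your discard step only goes the right way when $a\le\tfrac{n-1}{n}$, which is not assumed. Second, your claimed $k^2$--coefficient $\tfrac{p}{2p-2}+\tfrac{1}{a+1}$ is the one from Theorem~\ref{p1.2} (the $H\neq 0$ case), whereas the present statement has $\tfrac{p}{2p-2}k^2+\tfrac{p}{2p-2}\tfrac{n-1}{n}\tfrac{k^2}{a+1}$. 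Tracking $aq$ through the stability inequality as you describe cannot produce the factor $\tfrac{p}{2p-2}\tfrac{n-1}{n}$ in front of $\tfrac{k^2}{a+1}$.

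What the paper does instead in the minimal case is this. Since $\bar M$ is Hadamard one has $\overline{\Ric}(N,N)\le 0$, so the Ricci lower bound simplifies to $\Ric\ge -k^2-\tfrac{n-1}{n}S$ \emph{with no explicit $a$ or $q$}: the $aS$ from $\Phi_a=(\tfrac{n-1}{n}-a)S$ cancels against the $aS$ inside $aq$, and $-a\,\overline{\Ric}(N,N)\ge 0$ is dropped. The parameter $a$ then re-enters only through the separate estimate $\overline{\Ric}(N,N)\ge -\tfrac{k^2}{a+1}$, which combined with stability yields
\[
\int S\,\varphi^2|\omega|^p\ \le\ \int\big|\nabla(\varphi|\omega|^{p/2})\big|^2+\frac{k^2}{a+1}\int\varphi^2|\omega|^p.
\]
Substituting this bound for the $S$--integral (rather than discarding it) is exactly what generates the coefficient $\tfrac{p}{2p-2}\tfrac{n-1}{n}-1$ in front of $\int|\nabla(\varphi|\omega|^{p/2})|^2$ and the extra $\tfrac{p}{2p-2}\tfrac{n-1}{n}\tfrac{k^2}{a+1}$ in the $k^2$--term. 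With these two adjustments your Caccioppoli inequality and the rest of the argument go through as you wrote.
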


\begin{proof}Recall  the inequality we have proved
\begin{equation}\label{}
	\begin{split}
		|\omega|^{p-1}\Delta |\omega|^{p-1}\geq&  A_{p,n}|\nabla| \omega|^{p-1} |^{2}-\bigg\langle \delta d(|\omega|^{p-2}\omega),|\omega|^{p-2}\omega\bigg\rangle\\
		&+|\omega|^{2p-4}\left(-k^2-\frac{n-1}{n}S\right)|\omega|^{2}.
	\end{split}
\end{equation}
As in \eqref{eq3}, we have 
\begin{equation}
	\begin{split}
		&|\omega|^{\frac{p}{2}} \Delta|\omega|^{\frac{p}{2}}\\
		\geq& 	\left( \frac{2-p}{p}+\frac{p}{2p-2} A_{p,n} \frac{4(p-1)^2}{p^2}\right) |\nabla| \omega|^{\frac{p}{2}} |^{2}-\frac{p}{2p-2}\bigg\langle \delta d(|\omega|^{p-2}\omega),\omega\bigg\rangle\\
		&+\frac{p}{2p-2}\left(-k^2-\frac{n-1}{n}S\right)|\omega|^{p}.
	\end{split}
\end{equation}
Multiplying both sides by $ \phi^2 $
\begin{equation}\label{eqq}
	\begin{split}
			& \int_M \phi^2	\left( \frac{2-p}{p}+\frac{p}{2p-2} A_{p,n} \frac{4(p-1)^2}{p^2}\right) |\nabla| \omega|^{\frac{p}{2}} |^{2}-\frac{p}{2p-2}\int_M \phi^2\bigg\langle \delta d(|\omega|^{p-2}\omega),\omega\bigg\rangle\\
		&+\int_M \phi^2\frac{p}{2p-2}\left(-k^2-\frac{n-1}{n}S\right)|\omega|^{p}\\
		\leq & \int_M \phi^2|\omega|^{\frac{p}{2}} \Delta|\omega|^{\frac{p}{2}}\\
		=&\int_M |\nabla \phi |^2 |\omega|^p-\int_M |\nabla(\phi |\omega|^p) |^2.
	\end{split}
\end{equation}

However, by $-k^{2} \leq \overline{\operatorname{BiRic}}^{a}$, we know
\begin{equation}\label{}
	\begin{split}
	&	\int_{M \backslash B_{R}(o)}(-\frac{k^2}{a+1}+S) \varphi^{2}|\omega|^{ p}\\	
	\leq& \int_{M \backslash B_{R}(o)}(\overline{\operatorname{Ric}}(N)+S) \varphi^{2}|\omega|^{ p}\\  \leq& \int_{M \backslash B_{R}(o)}\left|\nabla\left(\varphi|\omega|^{\frac{p}{2}}\right)\right|^{2} .
	\end{split}
\end{equation}

Thus, we get
\begin{equation}\label{eqq3}
	\begin{split}
		&	\int_{M \backslash B_{R}(o)}(S \varphi^{2}|\omega|^{ p}	
		 \leq \int_{M \backslash B_{R}(o)}\left|\nabla\left(\varphi|\omega|^{\frac{p}{2}}\right)\right|^{2} + \frac{k^2}{a+1}\int_{M \backslash B_{R}(o)}\varphi^{2}|\omega|^{ p}.
	\end{split}
\end{equation}

Combining \eqref{eqq} and \eqref{eqq3}, we have 
\begin{equation}
	\begin{split}
		& \int_M \phi^2	\left( \frac{2-p}{p}+\frac{p}{2p-2} A_{p,n} \frac{4(p-1)^2}{p^2}\right) |\nabla| \omega|^{\frac{p}{2}} |^{2}\\
		&-\frac{p}{2p-2}\int_M \phi^2\bigg\langle \delta d(|\omega|^{p-2}\omega),\omega\bigg\rangle-\int_M \frac{p}{2p-2}k^2\phi^2|\omega|^{p}\\
		\leq&\int_M |\nabla \phi |^2 |\omega|^p-\int_M |\nabla(\phi |\omega|^p) |^2\\
		&+\frac{p}{2p-2}\left(\frac{n-1}{n}\right)\bigg[\int_{M \backslash B_{R}(o)}\left|\nabla\left(\varphi|\omega|^{\frac{p}{2}}\right)\right|^{2} + \frac{k^2}{a+1}\int_{M \backslash B_{R}(o)}\varphi^{2}|\omega|^{ p}\bigg],
	\end{split}
\end{equation}

which can be written as

\begin{equation}
	\begin{split}
		& \int_M \phi^2	\left( \frac{2-p}{p}+\frac{p}{2p-2} A_{p,n} \frac{4(p-1)^2}{p^2}\right) |\nabla| \omega|^{\frac{p}{2}} |^{2}\\
		&-\bigg[\frac{p}{2p-2}k^2+\frac{p}{2p-2}\left(\frac{n-1}{n}\right) \frac{k^2}{a+1}\bigg]\int_M \phi^2|\omega|^{p}-\int_M |\nabla \phi |^2 |\omega|^p\\
		\leq&\bigg[\frac{p}{2p-2}\left(\frac{n-1}{n}\right)-1\bigg]\int_{M \backslash B_{R}(o)}\left|\nabla\left(\varphi|\omega|^{\frac{p}{2}}\right)\right|^{2} +\frac{2p}{p-1}\frac{p-2}{p}\int_{M} \Big| \nabla |\omega|^{\frac{p}{2}}\Big|  \phi |\nabla \phi||\omega|^{\frac{p}{2}}.
	\end{split}
\end{equation}

Thus, we have 
\begin{equation}\label{m4}
	\begin{split}
		& \left( \frac{2-p}{p}+\frac{p}{2p-2} A_{p,n} \frac{4(p-1)^2}{p^2}-\epsilon_{1}\right)\int_M \phi^2	 |\nabla| \omega|^{\frac{p}{2}} |^{2}\\
		&-\int_M |\nabla \phi |^2 |\omega|^p\\
		\leq&\bigg[\frac{p}{2p-2}\left(\frac{n-1}{n}\right)-1+\bigg[\frac{p}{2p-2}k^2+\frac{p}{2p-2}\left(\frac{n-1}{n}\right) \frac{k^2}{a+1}\bigg]\frac{1}{\lambda_{1}(M)}\bigg]\int_{M \backslash B_{R}(o)}\left|\nabla\left(\varphi|\omega|^{\frac{p}{2}}\right)\right|^{2} \\
		&+2\frac{p-2}{p-1} \frac{1}{4\epsilon_{1}}\int_M |\nabla \phi |^2 |\omega|^p.
	\end{split}
\end{equation}

Following  \cite{dung2020harmonic}, if $ \frac{p}{2p-2}\left(\frac{n-1}{n}\right)-1+\bigg[\frac{p}{2p-2}k^2+\frac{p}{2p-2}\left(\frac{n-1}{n}\right) \frac{k^2}{a+1}\bigg]\frac{1}{\lambda_{1}(M)} \leq 0, $ then  by \eqref{m4}, we have 
\begin{equation*}
	\begin{split}
			& \left( \frac{2-p}{p}+\frac{p}{2p-2} A_{p,n} \frac{4(p-1)^2}{p^2}-\epsilon_{1}\right)\int_M \phi^2	 |\nabla| \omega|^{\frac{p}{2}} |^{2}\\
			\leq & 2\frac{p-2}{p-1} \frac{1}{4\epsilon_{1}}\int_M |\nabla \phi |^2 |\omega|^p.
	\end{split}
\end{equation*}
By the conditions on $ p $ and \eqref{g}\eqref{e}\eqref{f},  we have 
\begin{equation*}
	\begin{split}
		\frac{2-p}{p}+\frac{p}{2p-2} A_{p,n}\frac{4(p-1)^2}{p^2}>0.
	\end{split}
\end{equation*}

If $ \frac{p}{2p-2}\left(\frac{n-1}{n}\right)-1+\bigg[\frac{p}{2p-2}k^2+\frac{p}{2p-2}\left(\frac{n-1}{n}\right) \frac{k^2}{a+1}\bigg]\frac{1}{\lambda_{1}(M)} >0, $  then  by \eqref{m4}, we have
\begin{equation}
	\begin{split}
		& \left( \frac{2-p}{p}+\frac{p}{2p-2} A_{p,n} \frac{4(p-1)^2}{p^2}-\epsilon_{1}\right)\int_M \phi^2	 |\nabla| \omega|^{\frac{p}{2}} |^{2}\\
		\leq
		&\left( 1+2\frac{p-2}{p-1} \frac{1}{4\epsilon_{1}}\right) \int_M |\nabla \phi |^2 |\omega|^p\\
		&+\bigg[\frac{p}{2p-2}\left(\frac{n-1}{n}\right)-1+\bigg[\frac{p}{2p-2}k^2+\frac{p}{2p-2}\left(\frac{n-1}{n}\right) \frac{k^2}{a+1}\bigg]\frac{1}{\lambda_{1}(M)}\bigg]\\
		&\bigg((1+\frac{1}{\epsilon_2})\int_M |\nabla \phi |^2 |\omega|^p+ (1+\epsilon_2)\int_M | \phi |^2 |\nabla|\omega|^{\frac{p}{2}}|^2\bigg).
	\end{split}
\end{equation}
It follows that
\begin{equation}\label{plo}
	\begin{split}
		& \left( \frac{2-p}{p}+\frac{p}{2p-2} A_{p,n} \frac{4(p-1)^2}{p^2}-\epsilon_{1}-B(1+\epsilon_2)\right)\int_M \phi^2	 |\nabla| \omega|^{\frac{p}{2}} |^{2}\\
		\leq&
		\left( 1+2\frac{p-2}{p-1} \frac{1}{4\epsilon_{1}}\right) \int_M |\nabla \phi |^2 |\omega|^p+B(1+\frac{1}{\epsilon_2})\int_M |\nabla \phi |^2 |\omega|^p,
	\end{split}
\end{equation}
where $ B=\bigg[\frac{p}{2p-2}\left(\frac{n-1}{n}\right)-1+\bigg[\frac{p}{2p-2}k^2+\frac{p}{2p-2}\left(\frac{n-1}{n}\right) \frac{k^2}{a+1}\bigg]\frac{1}{\lambda_{1}(M)}\bigg]. $

Note  it suffices to prove that we can choose $ \epsilon_{1},\epsilon_2 $ such that 
\begin{equation*}
	\begin{split}
		 &\frac{2-p}{p}+\frac{p}{2p-2} A_{p,n} \frac{4(p-1)^2}{p^2}-\epsilon_{1}-B(1+\epsilon_2)\\
		  =&\frac{2-p}{p}+\frac{1}{2p-2}  \frac{4}{p}-\bigg\{\frac{p}{2p-2}\left(\frac{n-1}{n}\right)-1\\
		  &+\bigg[\frac{p}{2p-2}k^2+\frac{p}{2p-2}\left(\frac{n-1}{n}\right) \frac{k^2}{a+1}\bigg]\frac{1}{\lambda_{1}(M)}\bigg\}-\epsilon_{1}-B\epsilon_2>0.
	\end{split}
\end{equation*}
If $ p > 1+\sqrt{n-1} ,$ 
\begin{equation*}
	\begin{split}
		\frac{2-p}{p}+\frac{1}{2p-2} \frac{4}{p}-\bigg(\frac{p}{2p-2}\left(\frac{n-1}{n}\right)-1\bigg)>0.
	\end{split}
\end{equation*}
which holds if $ p<\frac{4n}{n-1}. $  Moreover, 
\begin{equation*}
	\begin{split}
		\lambda_{1} >& \left( \frac{2-p}{p}+\frac{1}{2p-2}  \frac{4}{p}-\bigg(\frac{p}{2p-2}\left(\frac{n-1}{n}\right)-1\bigg)\right)^{-1}\\
		&\times \left( \frac{p}{2p-2}k^2+\frac{p}{2p-2}\left(\frac{n-1}{n}\right) \frac{k^2}{a+1}\right) .
	\end{split}
\end{equation*}
 Thus in the case where  $ p > 1+\sqrt{n-1} ,$ we can choose $ \epsilon_{1},\epsilon_2 $ such that 
\begin{equation*}
	\begin{split}
		&\frac{2-p}{p}+\frac{p}{2p-2} A_{p,n} \frac{4(p-1)^2}{p^2}-\epsilon_{1}-A(1+\epsilon_2)>0.
			\end{split}
\end{equation*}

If $ p \leq 1+\sqrt{n-1} ,$  then
\begin{equation*}
	\begin{split}
		&\frac{2-p}{p}+\frac{p}{2p-2} A_{p,n} \frac{4(p-1)^2}{p^2}-\epsilon_{1}-A(1+\epsilon_2)\\
		=&\frac{2-p}{p}+\frac{1}{2p-2} \frac{(p-1)^2}{(m-1)} \frac{4}{p}\\
		&-\bigg\{\frac{p}{2p-2}\left(\frac{n-1}{n}\right)-1+\bigg[\frac{p}{2p-2}k^2+\frac{p}{2p-2}\left(\frac{n-1}{n}\right) \frac{k^2}{a+1}\bigg]\frac{1}{\lambda_{1}(M)}\bigg\}-\epsilon_{1}-A\epsilon_2.
	\end{split}
\end{equation*}
In this case, if
\begin{equation*}
	\begin{split}
		\frac{2-p}{p}+\frac{1}{2p-2} \frac{(p-1)^2}{(n-1)} \frac{4}{p}-\bigg(\frac{p}{2p-2}\left(\frac{n-1}{n}\right)-1\bigg)>0.
	\end{split}
\end{equation*}
which holds if $ p<\frac{4n}{n-1} $. In additon, 
\begin{equation*}
	\begin{split}
		\lambda_{1} >& \left( \frac{2-p}{p}+ \frac{2(p-1)}{p(n-1)} -\bigg(\frac{p}{2p-2}\left(\frac{n-1}{n}\right)-1\bigg)\right)^{-1}\\
		&\times \left( \frac{p}{2p-2}k^2+\frac{p}{2p-2}\left(\frac{n-1}{n}\right) \frac{k^2}{a+1}\right) ,
	\end{split}
\end{equation*}
  Thus in the case where  $ p \leq  1+\sqrt{n-1} ,$ we can choose $ \epsilon_{1},\epsilon_2 $ such that 
\begin{equation*}
	\begin{split}
		&\frac{2-p}{p}+\frac{p}{2p-2} A_{p,n} \frac{4(p-1)^2}{p^2}-\epsilon_{1}-A(1+\epsilon_2)>0.
	\end{split}
\end{equation*}

Hence, from \eqref{plo}, we get 
\begin{align}
	\int_{M}|\nabla |\omega|^{\frac{p}{2}}|^{2}\phi^{2}
	\leq C\int_{M}|\nabla \phi|^{2} |\omega|^{p}.
\end{align}

The remained proof  is the same as that in Theorem \ref{p1.2}.
\end{proof}

\end{document}